\pdfoutput=1
\documentclass[11pt,table]{amsart}

\usepackage[utf8]{inputenc}
\usepackage[T1]{fontenc}
\usepackage[a4paper,text={475pt,670pt},headsep=8mm, centering]{geometry}
\usepackage[hidelinks,pdfusetitle]{hyperref}
\usepackage[activate={true,nocompatibility},final,tracking=true,kerning=true,factor=1100,stretch=10,shrink=10]{microtype}
\usepackage{braket}    %%%% \bracket
\usepackage{amsthm}    %%%% theorems
\usepackage{amssymb}   %%%% \precurly
\usepackage{mathrsfs}  %%%% \mathsf{L}
\usepackage[shortlabels]{enumitem}
\usepackage{caption}
\usepackage[margin=5pt,justification=centering,labelformat=simple,labelfont=sc]{subcaption}

\usepackage{bbold}

%%%% THEOREMS %%%%%%

\theoremstyle{plain}
\newtheorem*{theo*}{Theorem}
\newtheorem{theorem}{Theorem}[section]
\newtheorem{corollary}[theorem]{Corollary}
\newtheorem{proposition}[theorem]{Proposition}

\newtheorem{definition}[theorem]{Definition}

\theoremstyle{remark}
\newtheorem{remark}[theorem]{Remark}
\newtheorem*{rem*}{Remark}
\newtheorem{example}[theorem]{Example}

%%%%%% OPERATORS %%%%%%

      % essential supremum
      % essential infimum
               % supremum
\DeclareMathOperator\spec{Spec}               % supremum
               % supremum
               % supremum

%%%%% PACKAGE DEPENDENT MACROS %%%%%

\newcommand{\cb}{\ensuremath{\mathscr{B}}}

\newcommand{\cf}{\ensuremath{\mathscr{F}}}

\newcommand{\ind}[1]{\mathbb{1}_{#1}}
\newcommand{\un}{\mathbb{1}}
\newcommand{\zero}{\mathbb{0}}
\newcommand{\ca}{\ensuremath{\mathscr{A}}}

\usepackage[backend=biber,
	    url=false,
	    doi=false,
	    isbn=false,
	    date=year,
	    maxbibnames=99,
	    giveninits,
	    sortcites=true,
	    sorting=nty,
	    style=numeric]{biblatex}

\renewbibmacro{in:}{}
%%% CLASSICAL MACROS
\newcommand{\abs}[1]{\left\lvert\,#1\,\right\rvert}
\newcommand{\norm}[1]{\left\lVert\,#1\,\right\rVert}

\newcommand{\floor}[1]{\left\lfloor\,#1\,\right\rfloor}
\newcommand{\R}{\ensuremath{\mathbb{R}}}

\newcommand{\C}{\ensuremath{\mathbb{C}}}
\newcommand{\N}{\ensuremath{\mathbb{N}}}

\newcommand{\Diag}{\ensuremath{\mathrm{Diag}}}

%%% SPECIFIC MACROS

\newcommand{\cpa}{\mathcal{P}^\mathrm{Anti}}
\newcommand{\F}{\ensuremath{\mathcal{F}}}
\newcommand{\AF}{\ensuremath{\mathcal{F}^\mathrm{Anti}}}
\newcommand{\kk}{\ensuremath{\mathrm{k}}}
\newcommand{\rd}{\ensuremath{\mathrm{d}}}
\newcommand{\cll}{\ensuremath{\mathcal{L}}}

\newcommand{\cp}{\ensuremath{\mathcal{P}}}

\newcommand{\FF}{\ensuremath{\mathbf{F}}}
\newcommand{\as}{\text{ a.s.}}

\newcommand{\costu}{\ensuremath{C_\mathrm{uni}}} 
\newcommand{\cmir}{\ensuremath{c_\star}}
\newcommand{\cmar}{\ensuremath{c^\star}}
\newcommand{\cmax}{c_{\max}}
\newcommand{\mir}{\ensuremath{R_{e\star}}}
\newcommand{\mar}{\ensuremath{R_e^\star}}
\newcommand{\Cinf}{\ensuremath{C_\star}} 
\newcommand{\Csup}{\ensuremath{C^\star}} 
\newcommand{\oa}{\ensuremath{\Omega_\mathrm{a}}}
 %indice de \oa
\newcommand{\oi}{\ensuremath{\Omega_\mathrm{i}}}
 %indice de \oi

\date{\today}

\author{Jean-François Delmas}
\address{Jean-François Delmas,
  CERMICS, \'{E}cole des Ponts, France}
\email{jean-francois.delmas@enpc.fr}

\author{Dylan Dronnier}
\address{Dylan Dronnier,
  Université de Neuchâtel, Switzerland}
\email{dylan.dronnier@unine.ch}

\author{Pierre-André Zitt}
\address{Pierre-André Zitt, LAMA, Université Gustave Eiffel, France}
\email{pierre-andre.zitt@univ-eiffel.fr}

\newcommand{\loss}{{R_e}}

\newcommand{\Ta}{\ensuremath{T_\mathrm{a}}}
\newcommand{\ka}{\ensuremath{\kk_\mathrm{a}}}
\newcommand{\cfi}{\ensuremath{\cf_\mathrm{inv}}}
\newcommand{\Id}{\ensuremath{\mathrm{Id}}}
\newcommand{\sq}{\ensuremath{\mathcal{A}}}

\usepackage{booktabs}
\usepackage{multirow}
\usepackage{utfsym,bm} % cross, bold maths

\setcounter{totalnumber}{1}

\title[On optimal vaccinations]{Optimal vaccinations: Cordons sanitaires,
  reducible population and optimal rays}

\begin{document}

\thanks{This work is partially supported by Labex B\'ezout reference ANR-10-LABX-58}

\subjclass[2010]{92D30, 47B34, 47A25, 58E17, 34D20}

\keywords{SIS Model, infinite dimensional ODE, kernel operator, vaccination strategy,
  effective reproduction number, multi-objective optimization, Pareto frontier, maximal
  independent set}

\begin{abstract}
  We consider the bi-objective problem of allocating doses of a (perfect) vaccine to an
  infinite-dimensional metapopulation in order to minimize simultaneously the vaccination
  cost and the   effective reproduction number~$R_e$, which is defined as the spectral
  radius of the effective next-generation operator.

  In this general framework, we prove that a \emph{cordon sanitaire}, that is, a strategy
  that effectively disconnects the non-vaccinated population, might not be optimal, but it
  is still better than the ``worst'' vaccination strategies. Inspired by graph theory, we
  also compute the minimal cost which ensures that no infection occurs using independent
  sets. Using Frobenius decomposition of the whole population into irreducible
  subpopulations, we give some explicit formulae for optimal (``best'' and ``worst'')
  vaccinations strategies. Eventually, we provide some sufficient conditions for a scaling
  of an optimal strategy to still be optimal.
\end{abstract}

\maketitle

\section{Introduction}

\subsection{Vaccination in metapopulation models}

In metapopulation epidemiological models, the population is composed of $N$
subpopulations labelled $1,\ldots,N$,  of respective sizes $\mu_1, \ldots, \mu_{N}$.
Following \cite{hill-longini-2003}, much of the behaviour of the epidemic may be derived
from the so called next-generation matrix $K = (K_{ij})_{1\leq i,j\leq N}$, where $K_{ij}$
corresponds  to an expected number of secondary infections for people in subpopulation $i$
resulting from a single randomly selected non-vaccinated infectious person in subpopulation
$j$.

A vaccination strategy is represented by a vector $\eta\in \Delta=[0, 1]^N$, where
$\eta_i$ is the \textbf{fraction of non-vaccinated} individuals in the
$i$\textsuperscript{th} subpopulation. In particular, $\eta_i$ is equal to $0$ when the
$i$\textsuperscript{th} subpopulation is fully vaccinated, and $1$ when it is not
vaccinated at all. The  strategy $\un\in \Delta$, with all its  entries equal to  1,
therefore corresponds to an entirely non-vaccinated population. The spectral radius
(\textit{i.e.}, the largest modulus of  the eigenvalues) of $K \cdot \Diag(\eta)$, denoted
$R_e(\eta)$, is referred to as the \emph{effective reproduction number}, and may then be 
interpreted as the expected number of cases directly generated by one typical case where
all non-vaccinated individuals are susceptible to the infection. In particular, we denote
by $R_0=R_e(\un)$ the so-called \emph{basic reproduction number} associated to the
metapopulation epidemiological model. We refer to Section~\ref{sec:ST-TNG} for the
computation of the reproduction number for a wide-class of compartmental metapopulation
models appearing in the literature.

With this interpretation of the reproduction number in mind, it is then natural to
minimize it on the space $\Delta$ under a constraint on the cost $C$. A  natural choice
for the cost function is given by the  uniform cost $\costu(\eta)= \sum_i (1 - \eta_i)
\mu_i$, which corresponds to  the fraction of vaccinated individuals in the population.
This constrained optimization problem appears in most of the literature for designing
efficient vaccination strategies for multiple epidemic situation (SIR/SEIR)
\cite{EpidemicsInHeCairns1989, hill-longini-2003, TheMostEfficiDuijze2016,
  CriticalImmuneMatraj2012, poghotanyan_constrained_2018, OptimalInfluenEnayat2020,
IdentifyingOptZhao2019}. Note that in some of these references, the effective reproduction
number is defined as the spectral radius of the matrix $\Diag(\eta) \cdot K$. Since the
eigenvalues of $\Diag(\eta) \cdot K$ are exactly the eigenvalues of the matrix $K\cdot
\Diag(\eta)$, this actually defines the same function $R_e$.

The goal of this paper is to prove a number of properties of the optimal vaccination
strategies associated to a bi-objective optimization problem with cost function $C$ and
loss function~$R_e$, that shed a light on how to vaccinate in the best possible
way. In previous works~\cite{delmas_infinite-dimensional_2020, ddz-theo}, we introduced
a general kernel framework in which the matrix formulation appears  as a special
finite-dimensional  case.  We state  our results in  this  general framework, but for 
ease of  the presentation,  we shall stick to the matrix formulation in this
introduction. We also refer the interested reader to~\cite{ddz-Re} for a detailed study of
$R_e$ and its convexity property, and to~\cite{ddz-reg} for various examples of
kernels and optimal vaccination strategies.

In our previous work~\cite{ddz-theo}, we assumed only minimal hypothesis on the so-called
loss function whose aims to measure the vulnerability of the population. Here, we choose
to take the effective reproduction number as the loss. We also consider strictly
decreasing cost functions (because vaccinating more people costs more; see
Section~\ref{sec:P-et-AP}). These more restrictive assumptions allow us to simplify some
of the statements made in~\cite{ddz-theo} and to give additional specific results.

In bi-objective optimization, one can identify Pareto (resp.\ anti-Pareto) optimal
vaccinations strategies, informally ``best'' (resp.\ ``worst'') vaccination strategies, in
the sense that every strategy that does strictly better for one objective must do strictly
worse for the other (resp.\ every strategy that does strictly worse for one objective must
do strictly better for the other). We refer to \cite[Section 5]{ddz-theo} for details. We
also  consider the Pareto frontier $\F$ (resp.\ anti-Pareto frontier $\AF$) as the
outcomes $(C(\eta), R_e(\eta))$ of the Pareto (resp.\ anti-Pareto) optimal strategies
$\eta$; see Section~\ref{sec:P-et-AP}. In Figure~\ref{fig:sym-circle-pareto}, we have
plotted in red the Pareto frontier and in a dashed red line the anti-Pareto frontier when
the next-generation matrix is the adjacency matrix of the non-oriented cycle graph with
$N=12$ nodes from Figure~\ref{fig:cycle-graph} and Example~\ref{ex:cycle-graph}; see also
Example~\ref{exple:meta}.

\begin{figure}
  \begin{subfigure}[T]{.5\textwidth}
    \centering
    \includegraphics[page=5]{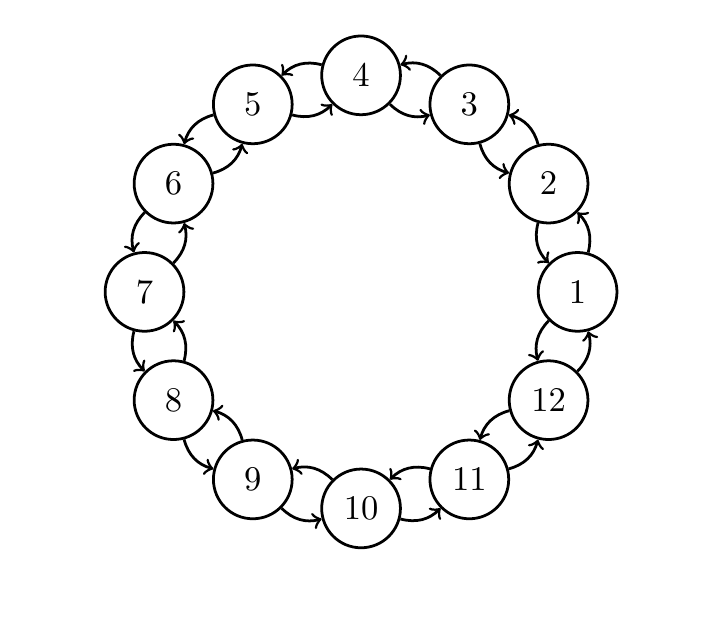}
    \caption{Thick red line: Pareto frontier; Dashed line: anti-Pareto
      frontier; x marker:
    outcomes of various strategies; light blue: all possible outcomes
    $(C(\eta), R_e(\eta))$ for $\eta\in \Delta$.}
    \label{fig:sym-circle-pareto}
  \end{subfigure}%
  \begin{subfigure}[T]{.5\textwidth}
    \centering
    \includegraphics[page=6]{cordon}
    \caption{Profile of various strategies.}
    \label{fig:sym-circle-strategies}
  \end{subfigure}
  \caption{Performance of the disconnecting vaccination strategy ``one
    in $4$'' for the non-oriented
    cycle graph with 12 nodes and uniform cost $3$.}
  \label{fig:perf}
\end{figure}

\subsection{A \emph{cordon sanitaire} is not the worst vaccination strategy}

Recall that a matrix $K$ is reducible if there exists a permutation $\sigma$ such that
$(K_{\sigma(i)\sigma(j)})_{i,j}$ is block upper triangular, and irreducible otherwise. A
\emph{cordon sanitaire} is a vaccination strategy $\eta$ such that the effective
next-generation matrix $K \cdot \Diag(\eta)$ is reducible. Informally, such a strategy
splits the effective population in at least two groups, one of which does not infect the
other.

Disconnecting the population by creating a cordon sanitaire is not always the ``best''
choice, that is, it may not be Pareto optimal. However, we prove in
Proposition~\ref{prop:cut} that a cordon sanitaire can never be anti-Pareto optimal; this
result still holds in the general kernel framework, provided that the definition of a
cordon sanitaire is generalized in an appropriate way.

\begin{example}[Non-oriented cycle graph]\label{ex:cycle-graph}
  %%% EXAMPLE
  Suppose that the matrix $K$ is given by the adjacency matrix of the non-oriented cycle
  graph with $N=12$ nodes and $\mu$ is the counting measure; see
  Figure~\ref{fig:cycle-graph} for the graph drawing and Figure~\ref{fig:kernel-cycle} for
  the grayscale representation of its corresponding kernel. For a cost $\costu = 3$,
  there is a \textit{cordon sanitaire} $\eta$ that consists in vaccinating one
  subpopulation in four; see Figure~\ref{fig:cycle-disc} and Figure~\ref{fig:kernel-sep}.
  The effective reproduction number is then equal to $\sqrt{2}$. This strategies performs
  better than the anti-Pareto optimal strategy but it is not Pareto optimal as we can see
  in Figure~\ref{fig:perf}. This example is discussed in detail in
  \cite[Section~2.4]{ddz-reg}.
\end{example}

\begin{figure}
  \begin{subfigure}[T]{.5\textwidth}
    \centering
    \includegraphics[page=1]{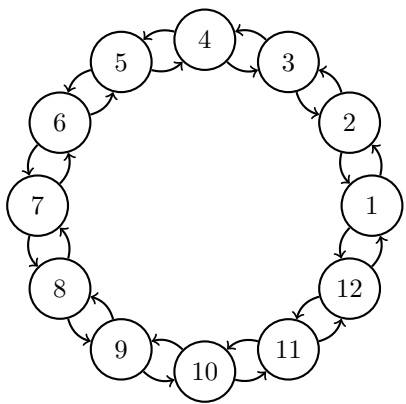}
    \caption{The non-oriented cycle graph.}\label{fig:cycle-graph}
  \end{subfigure}%
  \begin{subfigure}[T]{.5\textwidth}
    \centering
    \includegraphics[page=3]{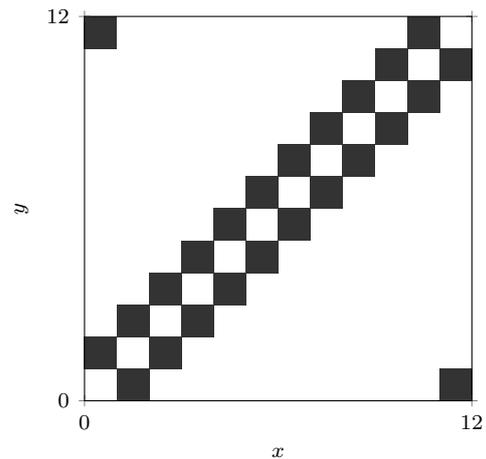}
    \caption{Grayscale representation of the corresponding kernel.}
    \label{fig:kernel-cycle}
  \end{subfigure}

  \begin{subfigure}[T]{.5\textwidth}
    \centering
    \includegraphics[page=2]{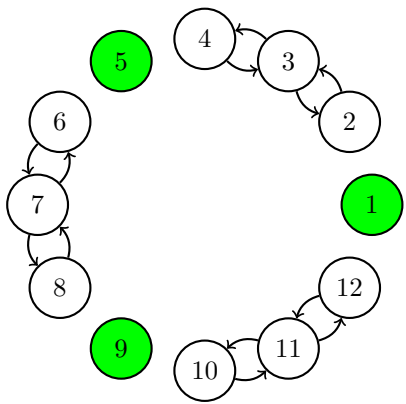}
    \caption{Cordon sanitaire corresponding to the
      ``one in $4$'' vaccination strategy. In green the vaccinated subpopulations.}
    \label{fig:cycle-disc}
  \end{subfigure}%
  \begin{subfigure}[T]{.5\textwidth}
    \centering
    \includegraphics[page=4]{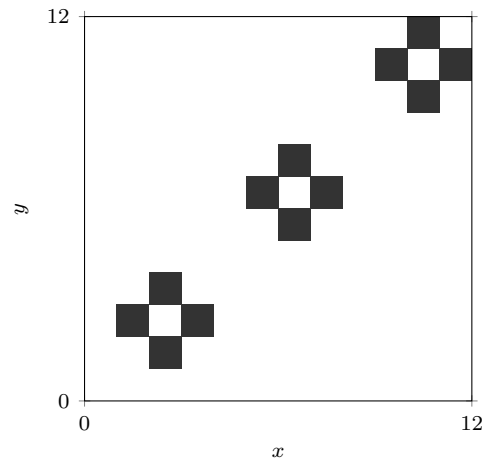}
    \caption{Grayscale representation of the corresponding effective kernel.}
    \label{fig:kernel-sep}
  \end{subfigure}
  \caption{Example of disconnecting vaccination strategy on the
    non-oriented cycle graph with $N=12$ nodes. The kernels are defined on the square
  $[0,12] \times [0,12]$ equipped with the Lebesgue measure.}
  \label{fig:cycle-kern-graph-disc}
\end{figure}

\subsection{Minimal cost required to completely stop the transmission of the disease}

Suppose that the next-generation matrix is symmetric. Then, a vaccination strategy $\eta$
such that $R_e(\eta)=0$ completely stops the transmission of the infection.
Section~\ref{sec:independance} is devoted to the computation of the minimal cost for
achieving this goal. We give in Proposition~\ref{prop:CRe(0)} an explicit
expression of this quantity in the kernel model. When $K$ is the adjacency matrix of a
graph of size $N$, $\mu$ is the counting measure over the set of nodes and the cost is
uniform, this expression is equal to the size of maximal independent sets. We observe this
property in Figure~\ref{fig:sym-circle-pareto} as the size of the maximal independent set
of the non-oriented cycle graph from Example~\ref{ex:cycle-graph} is equal to
$\floor{N/2}$.

\subsection{Reducible case}

When the matrix $K$ happens to be  reducible, up to a relabeling, we may assume that it is
block upper triangular. Denoting by $m$ the number of blocks and $I_1, \ldots, I_m$ the
sets of indices describing the blocks, this means that for  all $\ell>k$ and  $(i,j)\in
I_\ell\times  I_k$, we have $K_{ij} = 0$. In the epidemiological interpretation, this
means that the  populations with indices in $I_k$ never infect the  ones with indices in
$I_\ell$. One may then hope that the study of $R_e$ can be effectively reduced to  the
study of the effective radius  of the square sub-matrices  $(K_{ij})_{i,j\in I_k}$ 
describing the infections within block $I_k$. This is indeed the case, and we give in
Section~\ref{sec:reducible} a complete  picture of the Pareto  and anti-Pareto frontiers
of $R_e$, in terms of  the effective reproduction numbers restricted to each irreducible
component of the infection kernel or matrix. In particular, this allows  a better
understanding of why the anti-Pareto frontier may be discontinuous, while the  Pareto
frontier is always continuous. For the reduction to each irreducible component to be
effective for the  Pareto frontier, one has to assume that the  cost function is
extensive: the cost of vaccinating disjoint subsets of the population is  additive.  Once
more, special  care has to be  taken with the definitions when handling the infinite
dimensional kernel case.

\subsection{Optimal ray}

It is observed by Poghotanyan, Feng, Glasser and Hill in \cite[Theorem
4.3]{poghotanyan_constrained_2018}, that in the finite dimensional case, under an
assumption that ensures the convexity of the function~$R_e$, and for a uniform cost, if
there exists a Pareto optimal strategy $\eta$ with all its entries strictly less than 1,
then all the strategies $\lambda \eta$, with $\lambda\geq 0$ such that $\lambda \eta\in
\Delta$, are Pareto optimal. We give a short proof on the existence of such optimal rays
in Section~\ref{sec:ray} in a general kernel framework,  when  the cost function $C$ is
affine and $R_e$ is convex on $\Delta$.

\subsection{Organization of the paper}

We present in Section~\ref{sec:ST-TNG} different models for which the effective
reproduction number associated to an epidemic model with vaccination can be seen as the
spectral radius of a compact operator. In Section~\ref{sec:settings}, we present the
mathematical framework for the study of the effective reproduction function and the
associated bi-objective problems with a general cost function as well as the Pareto and
anti-Pareto frontiers. Section~\ref{sec:ray-eradicate} is devoted to the description of
optimal vaccination strategies which eradicate the epidemic, and  the possible existence
of optimal rays in the Pareto frontier. Using a Frobenius decomposition of the next
generation kernel in Section~\ref{sec:atom}, we first complete the description of  the
anti-Pareto frontier in the irreducible and  monatomic cases in Section~\ref{sec:q-irr}.
We study in Section~\ref{sec:cordon} the optimality of \emph{cordons sanitaires}
vaccination strategies and show in Section~\ref{sec:reducible} how the optimization
problem may be effectively reduced to the study on subpopulations when the next generation
kernel is reducible.

\section{Generality of  the effective next-generation operator}\label{sec:ST-TNG}

In \cite{delmas_infinite-dimensional_2020, ddz-theo}, we developed a framework that we
call the kernel model where the population is represented as an abstract  measure  space
$(\Omega, \mathscr{F}, \mu)$, with $\mu$ non-zero  $\sigma$-finite measure.  Individuals
are characterized by a trait $x \in \Omega$. The size of the subpopulation with trait
$x$ is given by $\mu(\mathrm{d} x)$. The underlying structure described by this trait can
be very diverse. Typical examples include
spatial position,  social contacts, susceptibility,  infectiousness, characteristics  of
the  immunological response, etc.  The analogue of  the next-generation matrix~$K$ is
the kernel operator defined formally by:
\[
  T_\kk ( g)(x) = \int_\Omega \kk(x,y) \, g(y) \,\mathrm{d}\mu(y),
\]
where the non-negative kernel $\kk$ is defined on $\Omega\times \Omega$ and $\kk(x,y)$
still represents a strength of infection from $y$ to $x$. Vaccination strategies $\eta \,
\colon \, \Omega \to [0,1]$ encode the \textbf{density of non-vaccinated individuals} with
respect to the measure~$\mu$. So, the strategy $\eta=\un$, the constant function equal to
1, corresponds to no vaccination in the population, whereas the strategy  $\eta=\zero$,
the constant function equal to 0, corresponds to all the population being vaccinated.
The measure $\eta(y)\, \mu(\mathrm{d} y)$ may then be understood as an effective
population, giving rise to an effective next-generation operator:
\[
  T_{\kk\eta} ( g)(x) = \int_\Omega \kk(x,y) \, g(y) \, \eta(y) \,
  \mu(\mathrm{d} y).
\]
The effective reproduction number is then defined by $R_e(\eta) = \rho(T_{\kk \eta})$,
where $\rho$ stands for the spectral radius of the operator and $\kk \eta$ for the kernel
$(\kk \eta)(x,y)=\kk(x,y) \eta(y)$.

The results mentioned in the introduction  will be given in  this general framework, which
is  flexible enough to  describe a wide range of  epidemic models from the  literature
including the metapopulation models. In the following of the Section, we  give a few
examples to support this claim. In each of them, the spectral radius of a given explicit
kernel operator appears as a threshold parameter, and the epidemic  either expands or dies
out depending  on  the value  of this parameter. Classical notations are used: $S$ denotes
the proportion of susceptible individuals, $E$  the proportion of those who have been
exposed  to the disease, $I$ the proportion of infected individuals, $R$ the proportion of
removed individuals in the  population. Thus $I(t, x)$ denotes the proportion of the
population with  trait $x\in \Omega$ which is infected at time $t\geq 0$. In the following
examples, the measure $\mu$ is assumed to be a probability measure.

\begin{example}[Metapopulation models]\label{exple:meta}
  %%% EXAMPLE: METAPOPULATION MODELS
  Recall that in metapopulation models, the population is divided into $N \geq 2$
  different subpopulations of respective proportional size $\mu_1, \ldots ,\mu_N$, and
  the reproduction number is given by $R_e(\eta) = \rho(K \cdot\Diag(\eta))$, where $K$ is
  the next generation matrix and $\eta$ belongs to $[0,1]^N$ and gives the proportion of
  non-vaccinated individuals in each subpopulation. To express the function $R_e$ as the
  effective reproduction number of a kernel model, consider the discrete state space
  $\Omega_{\mathrm{d}} = \{ 1, \ldots, N \}$ equipped with the probability measure
  $\mu_{\mathrm{d}}$ defined by $\mu_{\mathrm{d}}(\{i\}) = \mu_i$, and let
  $\kk_{\mathrm{d}}$ denote the discrete kernel on $\Omega_{\mathrm{d}}$ defined by:
  \begin{equation}\label{eq:next-kernel}
    \kk_{\mathrm{d}}(i,j) = K_{ij}/\mu_j.
  \end{equation}
  For all $\eta \in \Delta = [0,1]^N$, the matrix $K\cdot \Diag(\eta)$ is the matrix
  representation of the endomorphism $T_{\kk_{\mathrm{d}} \eta}$ in the canonical basis of
  $\R^N$. In particular, we have: $R_e(\eta) = \rho(T_{\kk \eta}) = \rho(K
  \cdot\Diag(\eta))$.

  In Figure~\ref{fig:kernel-cycle}, we have plotted a kernel on $[0, 1]$ endowed with the
  usual Borel $\sigma$-algebra and the Lebesgue measure. This kernel is equivalent to
  $\kk_{\mathrm{d}}$ when $K$ is the adjacency matrix of the non-oriented cycle graph and
  all subpopulations have the same size.
\end{example}

\begin{example}[An SIR model with nonlinear incidence rate and vital dynamics]
  %%% EXAMPLE
  In  \cite{thieme_global_2011},  Thieme proposed  an  SIR  model in  an
  infinite-dimensional population  structure with a  nonlinear incidence
  rate. The  structure space is  given by  $\Omega$ a compact subset of
  $\R^N$ with nonempty interior equipped  with  the  Lebesgue  measure  denoted  by
  $\mu$. We restrict slightly his assumptions so that the incidence rate
  is a linear  function of the number of susceptible.  Besides, we write
  explicitly  the  equation  giving   the  evolution  of  the  recovered
  compartment.  It  does not  play  a  role  in the  long-time  behavior
  analysis of  the equations made by  Thieme but it helps  to understand
  the model when taking into account the vaccination. The dynamic of the
  epidemic then writes:
  \begin{equation}
    \text{For $ t \geq 0, x \in \Omega$,}\,\, \left\{
      \begin{array}{ll}
	\partial_t S(t,x) = \Lambda(x) - \nu_S(x) S(t,x) -
        S(t,x)\,\int_\Omega f(I(t,y), x,
	y) \, \mu( \mathrm{d}y), \\
	\\
	\partial_t I(t,x) = S(t,x)\, \int_\Omega f(I(t,y), x, y) \, \mu(\mathrm{d}y)
	- (\gamma(x) + \nu_I(x))I(t,x), \\
	\\
	\partial_t R(t,x) = \gamma(x) I(t,x) - \nu_R(x) R(t,x),
      \end{array}
    \right.
  \end{equation}
  where, at location $x \in \Omega$:
  \begin{itemize}%[--]
    \item $\Lambda(x)$ is the rate at which fresh susceptible individuals are recruited,
    \item $\nu_S(x)$, $\nu_I(x)$, $\nu_R(x)$ are the \emph{per capita} death rate of the
      susceptible, infected and recovered individuals respectively,
    \item $\gamma(x)$ is the \emph{per capita} recovery rate of infected individuals,
    \item the integral term describes the incidence at time $t$, \emph{i.e.}, the rate of
      new infections.
  \end{itemize}
  The threshold  parameter identified in  \cite{thieme_global_2011}, that
  plays the  role of the reproduction  number, is given by  the spectral
  radius of the operator $T_\kk$ with the kernel $\kk$ given by:
  \begin{equation*}
    \kk(x,y) = \frac{\Lambda(x)}{\nu_S(x)(\gamma(x) + \nu_I(x))} \partial_I f(0, x,y),
    \quad x,y \in \Omega,
  \end{equation*}
  where $\partial_I f(0, x,y)$, the derivative of $f$ with respect to its first variable
  $I$, is supposed to be non-negative.

  Suppose that individuals at location $x$ are vaccinated with probability $1-\eta(x)$ at
  birth. In the corresponding model, the rate at which susceptible individuals with trait
  $x$ are recruited becomes equal to $\eta(x) \Lambda(x)$ while recovered/immunized
  individuals are recruited at rate $(1-\eta(x)) \Lambda(x)$ at location $x$ so that the
  dynamic of the recovered compartment is given by:
  \begin{equation*}
    \partial_t R(t,x) = (1 - \eta(x)) \Lambda(x) + \gamma(x) I(t,x) - \nu_R(x) R(t,x),
    \qquad x \in \Omega,\, t \geq 0.
  \end{equation*}
  The  threshold parameter  $R_e(\eta)$ is  then given  by the  spectral radius of the
  integral operator  $T_{\eta \kk}$ with kernel $\eta\kk $ given    by
  $(\eta\kk)(x,y)=\eta(x)   \kk(x,y)$. According to Equation~\eqref{eq:r(AB)}, we  have
  $\rho(T_{\eta \kk})  = \rho(T_{\kk \eta})$,  and our framework  can be used for this
  model.

  Under regularity assumptions on the parameters of the model, Thieme proved that if
  $R_e(\eta)$ is greater than $1$, then there exists an endemic equilibrium that attracts
  all the solutions while if $R_e(\eta)$ is smaller than $1$, then $I(t,x)$ converges to $0$
  for all $x \in \Omega$ as $t$ goes to infinity.
\end{example}

\begin{example}[An SEIR model without vital dynamics]
  %%% EXAMPLE SEIR
  In \cite{FinalSizeAndAlmeid2021}, Almeida, Bliman, Nadin and Perthame studied an
  heterogeneous SEIR model where the population is again structured with a bounded subset
  $\Omega \subset \R^N$ with nonempty interior equipped with the Lebesgue measure denoted by $\mu$.
  This time however there is no birth nor death of the individuals.
  The dynamic of the susceptible, exposed, infected and recovered individuals writes:
  \begin{equation}
    \text{For $t \geq 0$, $x \in \Omega$,}\qquad \left\{
      \begin{array}{ll}
	\partial_t S(t,x) = - S(t,x) \,\int_\Omega k(x,y) I(t,y)\, \mu(\mathrm{d}y), \\
	\\
	\partial_t E(t,x) = S(t,x) \, \int_\Omega k(x,y) I(t,y)\,
	\mu(\mathrm{d}y) - \alpha(x) E(t,x), \\
	\\
	\partial_t I(t,x) = \alpha(x) E(t,x) - \gamma(x) I(t,x), \\
	\\
	\partial_t R(t,x) = \gamma(x) I(t,x).
      \end{array}
    \right.
  \end{equation}
  Here, the average incubation rate is denoted by $\alpha(x)$ and the average recovery
  rate by $\gamma(x)$; both quantities may depend upon the trait $x$. The function $k$ is
  the transmission kernel of the disease. In this model, the basic reproduction number is
  given by the spectral radius of the integral operator $T_\kk$ with kernel $\kk=k/\gamma$
  given by:
  \begin{equation}\label{eq:SEIR}
    \kk(x,y) = k(x,y)/\gamma(y).
  \end{equation}
Note that the basic reproduction number does not depend on the  average
incubation rate $\alpha$ as in the one-dimensional SEIR model with
constant population size; see~\cite[Section~2.2]{driessche} with death
rate $d=0$.

  Suppose that, prior to the beginning of the epidemic, the decision maker immunizes a
  density $1 - \eta$ of individuals. According to
  \cite[Section~3.2]{FinalSizeAndAlmeid2021}, the effective reproduction number is given
  by $\rho(T_{\eta \kk})$ which is also equal to $\rho(T_{\kk \eta})$.
  Hence, our model is
  indeed suitable for designing optimal vaccination strategies in this context.
\end{example}

\begin{example}[An SIS model without vital dynamic]\label{exple:sis}
  %%% EXAMPLE
  In \cite{delmas_infinite-dimensional_2020}, generalizing the discrete model of
  Lajmanovich and Yorke~\cite{lajmanovich1976deterministic}, we introduced the following
  heterogeneous SIS model where the population is structured with an abstract probability
  space~$(\Omega, \mathscr{F}, \mu)$:
  \begin{equation}
    \text{For $t \geq 0$, $x \in \Omega$,}\qquad \left\{
      \begin{array}{ll}
	\partial_t S(t,x) = - S(t,x) \,\int_\Omega k(x,y) I(t,y)\, \mu(\mathrm{d}y) + \gamma(x)
	I(t,x), \\
	\\
	\partial_t I(t,x) = S(t,x) \,\int_\Omega k(x,y) I(t,y)\, \mu(\mathrm{d}y) - \gamma(x)
	I(t,x).
      \end{array}
    \right.
  \end{equation}
  The function $\gamma$ is the \emph{per-capita} recovery rate and $k$ is the transmission
  kernel. For this model, $R_e(\eta) = \rho(T_{\kk \eta})$ where $\kk=k/\gamma $ is
  defined by $\kk(x,y) = k(x,y)/\gamma(y)$.

  Suppose that, prior to the beginning of the epidemic, a density $1 - \eta$ of
  individuals is vaccinated with a perfect vaccine. In the same way as for the SEIR model,
  we proved, as $t$ goes to infinity, that if $R_e(\eta)$ is smaller than or equal to $1$,
  then $I(t,\cdot)$ converges to $0$, and, under a connectivity assumption on the kernel
  $k$, that if $R_e(\eta)$ is greater than $1$, then $I(t,\cdot)$ converges to the
  (unique)  positive  endemic equilibrium. This highlights the importance of $R_e$ in the
  design of vaccination strategies.
\end{example}

\section{Setting, notations and previous results}\label{sec:settings}

\subsection{Spaces, operators, spectra}\label{sec:spaces}

All metric spaces~$(S,d)$ are endowed with their Borel~$\sigma$-field denoted by $\cb(S)$.
Let~$(\Omega, \cf, \mu)$ be a measured space, with $\mu$ a $\sigma$-finite positive and
non-zero measure.  For~$f$ and~$g$ real-valued functions defined on~$\Omega$, we
write~$\langle f, g \rangle$ or $\int_\Omega f g \, \mathrm{d} \mu$ for $\int_\Omega f(x)
g(x) \,\mu( \mathrm{d} x)$ whenever the latter is meaningful.  For~$p \in [1, +\infty]$,
we denote by $L^p=L^p( \mu)=L^p(\Omega, \mu)$ the space of real-valued measurable
functions~$g$ defined on~$\Omega$ such that $\norm{g}_p=\left(\int |g|^p \, \mathrm{d}
\mu\right)^{1/p}$ (with the convention that~$\norm{g}_\infty$ is the~$\mu$-essential
supremum of $|g|$) is finite, where functions which agree~$\mu$-a.e.\ are identified.  We
denote by~$L^p_+$ the subset of~$L^p$ of non-negative functions.  We define~$\Delta$ as
the subset of $L^\infty $ of $[0, 1]$-valued measurable functions defined on~$\Omega$. We
denote by $\un$ (resp. $\zero$) the constant function on $\Omega$ equal to $1$ (resp.
$0$); both functions  belong to $\Delta$.

Let~$(E, \norm{\cdot})$ be a complex Banach space. We denote by~$\norm{\cdot}_E$ the
operator norm on~$\cll(E)$ the Banach algebra of linear bounded operators. The
spectrum~$\spec(T)$
of~$T\in \cll(E)$ is the set of~$\lambda\in \C$ such that~$ T - \lambda \mathrm{Id}$ does
not have a bounded inverse, where~$\mathrm{Id}$ is the identity operator on~$E$. Recall
that~$\spec(T)$ is a compact subset of~$\C$, and that the spectral radius of~$T$ is given
by:
\begin{equation}\label{eq:def-rho}
  \rho(T)=\max\{|\lambda|\, \colon\, \lambda \in \spec(T)\}=
  \lim_{n\rightarrow \infty } \norm{T^n}_E^{1/n}.
\end{equation}
The element $\lambda\in \spec(T)$ is an eigenvalue if there exists $x\in E$ such that
$Tx=\lambda x$ and $x\neq 0$.

Recall that the spectrum of a compact operator is finite or countable and has at most one
accumulation point, which is $0$. Furthermore, $0$ belongs to the spectrum of compact
operators  in infinite  dimension.   If $A\in  \cll(E)$  is compact  and $B\in \cll(E)$,
then both $AB$ and $BA$ are compact and:
\begin{equation}\label{eq:r(AB)}
  \rho(AB)=\rho(BA).
\end{equation}

We refer  to \cite{schaefer_banach_1974}  for an introduction  to Banach lattices and
positive operators. We  shall only consider the real Banach lattices $L^p=L^p(\Omega,
\mu)$  for $p\in [1, +\infty ]$  on a measured space $(\Omega, \cf,  \mu)$ with a
$\sigma$-finite  non-zero measure, as well as their complex extension. (Recall that the
norm of an operator on $L^p$ or its natural complex extension is the
same according to~\cite[Corollary~1.3]{complex}). A  bounded operator $A$  is positive if
$A(L^p_+) \subset L^p_+$.  If  $A,  B\in  \cll(L^p)$ and  $A-B$  are positive operators,
then:
\begin{equation}
  \label{eq:r(A)r(B)}
  \rho(A)\geq \rho(B).
\end{equation}

If~$E$ is also a real or complex function space, for~$g \in E$, we denote by~$M_g$ the
multiplication operator  (possibly unbounded) defined by~$M_g(h)=gh$ for all~$h \in E$. If
furthermore $g$ is the  indicator function of a set $A$, we simply write $M_A$ for
$M_{\ind{A}}$.

\subsection{Kernel operators}

We define a \emph{kernel} (resp. \emph{signed kernel}) on~$\Omega$ as a $\R_+$-valued
(resp. $\R$-valued) measurable function defined on~$(\Omega^2, \mathscr{F}^{\otimes 2})$.
For~$f,g$ two non-negative measurable functions defined on~$\Omega$ and~$\kk$ a kernel
on~$\Omega$, we denote by $f\kk g$ the kernel defined by:
\begin{equation}\label{eq:def-fkg}
  f\kk g:(x,y)\mapsto f(x)\, \kk(x,y) g(y).
\end{equation}

For~$p \in (1, +\infty )$, we define the double norm of a signed kernel~$\kk$ on $L^p$ by:
\begin{equation}\label{eq:Lp-integ-cond}
  \norm{\kk}_{p,q}=\left(\int_\Omega\left( \int_\Omega \abs{\kk(x,y)}^q\,
  \mu(\mathrm{d}y)\right)^{p/q} \mu(\mathrm{d}x) \right)^{1/p}
  \quad\text{with~$q$ given by}\quad \frac{1}{p}+\frac{1}{q}=1.
\end{equation}
We say that $\kk$ has a finite double norm, if there exists~$p \in (1, +\infty )$ such
that $\norm{\kk}_{p,q}<+\infty$. To such a kernel $\kk$, we then associate the positive
integral operator~$T_\kk$ on~$L^p$ defined by:
\begin{equation}\label{eq:def-Tkk}
  T_\kk (g) (x) = \int_\Omega \kk (x,y)\, g(y)\,\mu(\mathrm{d}y)
  \quad \text{for } g\in L^p \text{ and } x\in \Omega.
\end{equation}
According to~\cite[p. 293]{grobler}, the operator $T_\kk$ is compact. It is well known and easy to
check that:
\begin{equation}\label{eq:double-norm-norm}
  \norm{ T_\kk }_{L^p}\leq \norm{\kk}_{p,q}.
\end{equation}
We define the \emph{reproduction number} associated to the operator $T_\kk$ as:
\begin{equation}\label{eq:def-R0}
  R_0[\kk]=\rho(T_\kk).
\end{equation}

\subsection{The effective reproduction number~%
  \texorpdfstring{$R_e$}{Re}}\label{sec:weak-topo}

A \emph{vaccination strategy}~$\eta$ of a vaccine with perfect efficiency is an element
of~$\Delta$, where~$\eta(x)$ represents the proportion of \emph{\textbf{non-vaccinated}}
individuals with feature~$x$, so that the constant functions  $\eta=\un$ and
$\eta=\zero$ correspond respectively to no vaccination and complete vaccination. Notice
that~$\eta\, \mathrm{d} \mu$ corresponds in a sense to the effective population. Let $\kk$
be a kernel on $\Omega$ with finite double norm on $L^p$. For~$\eta\in \Delta$, the
operator~$M_\eta$ is bounded on $L^p$, whence the operator $T_{\kk \eta} = T_\kk M_\eta$
is compact. We define the \emph{effective reproduction number} function $R_e[\kk]$
from~$\Delta$ to~$\R_+$ by:
\begin{equation}\label{eq:def-R_e}
  R_e[\kk](\eta)=\rho(T_{\kk\eta}),
\end{equation}
and the corresponding reproduction number is then given by $ R_0[\kk]=R_e[\kk](\un)$. When
there is no risk of confusion on the kernel~$\kk$, we simply write $R_e$ and $R_0$ for the
function~$R_e[\kk]$ and the number~$R_0[\kk]$.

We can see~$\Delta$ as a subset of~$L^\infty  $, and consider the corresponding
\emph{weak-*  topology}: a sequence~$(g_n, n \in \N)$ of elements of~$\Delta$ converges
weakly-* to~$g$ if for all~$h \in L^1 $ we have:
\begin{equation}\label{eq:weak-cv}
  \lim\limits_{n \to \infty} \int_\Omega h g_n \, \mathrm{d}\mu= \int_\Omega h g\, \mathrm{d}\mu.
\end{equation}
The set~$\Delta$ endowed with the weak-* topology is compact and sequentially compact
\cite[Lemma~3.1]{ddz-theo}.  We also recall the properties of the effective reproduction
number given in \cite[Proposition~4.1 and Theorem~4.2]{ddz-theo}.

\begin{proposition}\label{prop:R_e}
  %%% PROPOSITION : PROPRIETE DE R_E
  Let  $\kk$  be  a  finite  double norm  kernel  on  a  measured  space
  $(\Omega,   \cf,  \mu)$   where $\mu$ is  a   $\sigma$-finite  non-zero   measure
  Then, the function~$R_e=R_e[\kk]$ is a continuous function from
  $\Delta$   (endowed  with   the  weak-*   topology)  to~$\R_+$.
  Furthermore,  the  function~$R_e=R_e[\kk]$   satisfies  the  following
  properties:
  \begin{enumerate}[(i)]
  \item\label{prop:a.s.-Re}%
    $R_e(\eta_1)=R_e(\eta_2)$ if~$\eta_1=\eta_2,\, \mu\as$, and~$\eta_1, \eta_2 \in \Delta$,
  \item\label{prop:min_Re}%
    $R_e(\zero) = 0$ and~$R_e(\un) = R_0$,
  \item\label{prop:increase}%
    $R_e(\eta_1) \leq R_e(\eta_2)$ for all~$\eta_1, \eta_2\in \Delta$ such
    that~$\eta_1\leq \eta_2$,
  \item\label{prop:normal}%
    $R_e(\lambda \eta) = \lambda R_e(\eta)$, for all~$\eta \in \Delta$ and~$\lambda \in [0,1]$.
  \end{enumerate}
\end{proposition}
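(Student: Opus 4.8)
I would dispatch the four listed properties first, since each follows directly from the definition \eqref{eq:def-R_e}, the factorization $T_{\kk\eta}=T_\kk M_\eta$, and the spectral facts recalled in Section~\ref{sec:spaces}. For the almost-sure invariance: if $\eta_1=\eta_2$ $\mu$-a.e.\ they are the same element of $L^\infty$, hence $M_{\eta_1}=M_{\eta_2}$ and $T_{\kk\eta_1}=T_{\kk\eta_2}$, so the spectral radii agree. For the normalisation: $T_{\kk\zero}=0$ gives $R_e(\zero)=0$, while $M_\un=\Id$ gives $T_{\kk\un}=T_\kk$ and hence $R_e(\un)=\rho(T_\kk)=R_0$ by \eqref{eq:def-R0}. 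For monotonicity: when $\eta_1\le\eta_2$ the nonnegativity of $\kk$ shows $T_{\kk\eta_2}-T_{\kk\eta_1}=T_{\kk(\eta_2-\eta_1)}$ is a positive operator, so \eqref{eq:r(A)r(B)} yields $R_e(\eta_1)\le R_e(\eta_2)$. For positive homogeneity: the kernel $\kk(\lambda\eta)$ equals $\lambda\,\kk\eta$, so $T_{\kk(\lambda\eta)}=\lambda T_{\kk\eta}$ and $\rho(\lambda T_{\kk\eta})=\lambda\rho(T_{\kk\eta})$ for $\lambda\in[0,1]$.

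The substance of the statement is the weak-* continuity of $R_e$, and this is where I expect the only real difficulty. Since $(\Delta,\text{weak-*})$ is sequentially compact by \cite[Lemma~3.1]{ddz-theo}, the plan is to establish that $R_e(\eta_n)\to R_e(\eta)$ whenever $\eta_n\to\eta$ in the sense of \eqref{eq:weak-cv}. The first step records what weak-* convergence gives at the operator level: for fixed $g\in L^p$ and $h\in L^q$ (with $1/p+1/q=1$) one has $gh\in L^1$, so \eqref{eq:weak-cv} applied to $gh$ gives $\int \eta_n g h\,\mathrm{d}\mu\to\int\eta g h\,\mathrm{d}\mu$; that is, $M_{\eta_n}g$ converges to $M_\eta g$ weakly in $L^p$. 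Thus $M_{\eta_n}\to M_\eta$ in the weak operator topology, with $\norm{M_{\eta_n}}_{L^p}\le 1$.

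The key is then to upgrade this using the compactness of $T_\kk$. Because a compact operator is completely continuous, it maps the weakly convergent sequence $(M_{\eta_n}g)_n$ to a norm-convergent one, whence $T_{\kk\eta_n}g=T_\kk(M_{\eta_n}g)\to T_\kk(M_\eta g)=T_{\kk\eta}g$ in $L^p$ for every $g$: the operators $T_{\kk\eta_n}$ converge to $T_{\kk\eta}$ in the strong operator topology. Moreover the family is collectively compact: since $\abs{\eta_n}\le 1$ we have $\bigcup_n T_{\kk\eta_n}(\{\norm{g}_p\le1\})\subseteq T_\kk(\{\norm{g}_p\le1\})$, and the latter is relatively compact because $T_\kk$ is compact. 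Strong convergence together with collective compactness (and compactness of the limit) is exactly the setting of the classical spectral-approximation theory for collectively compact operators, from which I would conclude that the nonzero spectra of $T_{\kk\eta_n}$ converge to that of $T_{\kk\eta}$, and in particular $R_e(\eta_n)=\rho(T_{\kk\eta_n})\to\rho(T_{\kk\eta})=R_e(\eta)$.

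The point I want to stress, as the genuine obstacle, is that one cannot shortcut this through norm convergence: weak-* convergence of $\eta_n$ does \emph{not} force $\norm{T_{\kk\eta_n}-T_{\kk\eta}}_{L^p}\to0$ (an oscillating $\eta_n$ with fixed weak-* limit already gives a counterexample), and on the operator norm the spectral radius is only upper semicontinuous, not continuous. The role of the compactness of $T_\kk$, through complete continuity and collective compactness, is precisely to restore continuity of $\rho$. The only case needing a separate word is $R_e(\eta)=0$, where the limit spectrum is $\{0\}$: there the same collectively compact theory forbids nonzero spectral values of $T_{\kk\eta_n}$ from accumulating away from $0$, which gives $\rho(T_{\kk\eta_n})\to0$ by the upper-semicontinuity half of the argument.
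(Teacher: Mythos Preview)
The paper does not prove this proposition: it is stated as a recall of \cite[Proposition~4.1 and Theorem~4.2]{ddz-theo}, so there is no in-paper argument to compare against. Your treatment of properties~\ref{prop:a.s.-Re}--\ref{prop:normal} is correct and essentially immediate from the definitions, as you say.

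Your route to weak-* continuity via collectively compact operators is sound. The chain ``$\eta_n\to\eta$ weak-* $\Rightarrow$ $M_{\eta_n}\to M_\eta$ in the weak operator topology $\Rightarrow$ $T_{\kk\eta_n}\to T_{\kk\eta}$ strongly (by complete continuity of the compact $T_\kk$), together with $\bigcup_n T_{\kk\eta_n}(B_{L^p})\subset T_\kk(B_{L^p})$ relatively compact'' is exactly the Anselone setting, and the conclusion $\rho(T_{\kk\eta_n})\to\rho(T_{\kk\eta})$ follows from the standard spectral approximation results for collectively compact, strongly convergent families. Two small remarks: first, you should make explicit that sequential continuity suffices for continuity on $\Delta$ (this is not automatic for the weak-* topology in general; in \cite{ddz-theo} it is handled by working with a metrizable topology on $\Delta$, or by a separability reduction); second, your aside that the spectral radius is ``only upper semicontinuous'' in operator norm is a comment about general bounded operators and is not needed for the argument---the real obstruction you correctly identify is simply that $T_{\kk\eta_n}$ need not converge in norm.
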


\subsection{Pareto and anti-Pareto frontiers}\label{sec:P-et-AP}

Let $\kk$ be a kernel on $\Omega$ with a finite double norm. We consider the effective
reproduction function $R_e=R_e[\kk]$ defined on $\Delta$ as a loss function.  We quantify
the cost of the vaccination strategy $\eta\in \Delta$ by a function $C: \Delta \rightarrow
\R^+$, and we assume that $C(\un)=0$ (doing nothing costs nothing), $C$ is continuous for
the weak-* topology on $\Delta$ defined in Section~\ref{sec:weak-topo} and
\emph{decreasing} (doing more costs strictly more),  that is, for any $\eta_1, \eta_2\in
\Delta$:
\[
  \eta_1\leq \eta_2 \quad\text{and}\quad
  \mu(\eta_1<\eta_2)>0
  \,\implies \, C(\eta_1)> C(\eta_2).
\]

For example, when the measure $\mu$ is finite, the uniform cost function:
\begin{equation}\label{eq:def-C}
  \costu(\eta)=\int_\Omega (1-\eta)\, \rd \mu.
\end{equation}
is continuous and decreasing on $\Delta$ (recall that $1-\eta$ represents the  proportion
of the population which has  been vaccinated when using the strategy $\eta$.)

\newcommand{\missing}{\textcolor{purple}{\hspace{3em}\usym{2717}}}
\newcommand{\mc}[1]{\multicolumn{2}{c}{#1}}
\begin{table}[t]
   \centering
   \footnotesize
  \begin{tabular}{ @{\extracolsep{\fill}} lll}
    \toprule
    & ``Best'' vaccinations & `` Worst'' vaccinations \\
    \midrule
    Optimization problem  &
    Pb~\eqref{eq:bi-min}: $\min_\Delta(C,R_e)$ &
    Pb~\eqref{eq:bi-max}: $\max_\Delta(C,R_e)$ \\
    \midrule
    \multirow{4}{ 12em}
     {Opt.\ cost for a given loss
    defined on $[0, R_0]$, with $R_0:\,=\max_\Delta R_e = R_e(\un)$.
    }
    & $\Cinf (\ell):\,=\min_{\, R_e\leq  \ell}\,  C$.
    & $\Csup (\ell):\, =\max_{\, R_e\geq  \ell}\,  C$.
    \\
    & $\Cinf$ is continuous.
    & \missing \\
    &  $\Cinf$ is  decreasing.
    & $\Csup$ is  decreasing.\\
    & $\Cinf(R_0)=0$ and $\cmir\!:\,=\Cinf(0)$.
    &$\Csup(0)=\cmax$ and $\cmar\!:\,= \Csup(R_0)$.
    \\
    \midrule
    \multirow{5}{12em}
    {Opt.\ loss for a given cost
    defined on $[0, \cmax]$, with
       $\cmax:\,=\max_\Delta C = C(\zero)$.}
    & $\mir (c):\,=\min_{\, C\leq  c}\,  R_e$.
    & $\mar (c):\, =\max_{\, C\geq  c}\,  R_e$.\\
    & $\mir$ is continuous.
    & $\mar$ is continuous. \\
    & $\mir$ is  decreasing on $[0, \cmir]$.
    & \missing \\
    &$\mir=0$ on $[\cmir, \cmax]$.
    & $\mar =R_0$  on $[0, \cmar]$. \\
& $\mir(0)=R_0$. & $\mar (\cmax)=0$. \\
    \midrule
\multirow{2}{12em}{Inverse formula }
    & $\mir\circ \Cinf =\Id\,$ on $[0, R_0]$.
    & $\mar\circ \Csup =\Id\,$ on $[0, R_0]$.\\
    & $\Cinf\circ \mir =\Id\, $ on $[0, \cmir].$
    & \missing \\
    \midrule
\multirow{4}{12em}{Optimal strategies}
    & $\cp:\,=\set{C=\Cinf\circ R_e} \cap \set{R_e = \mir \circ C }$
     & $\cpa:\,=\set{C=\Csup\circ R_e} \cap \set{R_e = \mar \circ C }$ \\
    &  $\phantom{\cp:} =\set{C=\Cinf\circ R_e}$
     &  $\phantom{\cpa:} =\set{C=\Csup\circ R_e}$ \\
     &  $\phantom{\cp:} =   \set{R_e=\mir \circ C, \, C\leq  \cmir}$.
             &   $\phantom{\cpa:} =\set{R_e=\mar \circ
                   C, \, C\geq  \cmar}$. \\
    & $\cp$ is compact. & \missing \\
    \midrule
    Range of cost/loss
    &  \mc{ $\sq :\,=[0, \cmax]\times [0, R_0]$}  \\
\midrule
   \multirow{3}{12em}{Possible outcomes}
    &  \mc{ $\FF :\,=(C, R_e)(\Delta)\phantom{\sq\, \colon\,\mir(c)\leq \ell \leq \mar(c)} $}  \\
    & \mc{ $ \phantom{\FF:} = \set{(c, \ell) \in \sq\, \colon\,
      \mir(c)\leq \ell \leq \mar(c)}$}\\
    & \mc{ $ \phantom{\FF:} = \set{(c, \ell) \in \sq\, \colon\,    \Cinf(\ell)\leq c \leq  \Csup(\ell)}.$} \\
\midrule
\multirow{4}{12em}{Optimal frontier}
    & $\F :\,=(C, R_e)(\cp)$
    & $\AF:\,= (C, R_e)(\cpa)$\\
    &$\phantom{\F:}=(\Cinf, \Id)([0, R_0])$
    & $ \phantom{\AF:}=(\Csup, \Id)([0, R_0])$. \\
    & $\phantom{\F:}=(\Id, \mir)([0, \cmir]) $. &\missing \\
    & $\F$ is connected and compact. &   \missing  \\
    \bottomrule
  \end{tabular}%

  \medskip

  {\small
 The missing results, indicated by
    \textcolor{purple}{\usym{2717}},  will be further
    completed under some additional conditions on the kernel $\kk$
    (see Proposition~\ref{prop:k>0-c} for $\kk$ positive and
    Corollary~\ref{cor:monat} for $\kk$ monatomic).}

  \caption{%
    Summary of notation and results for the bi-objective problems.}

  \label{tab:not-PAP}
\end{table}

In \cite{ddz-theo}, we formalized and study the problem of optimal allocation strategies
for a perfect vaccine. This question may be viewed as a bi-objective minimization problem,
where one tries to minimize simultaneously the cost of the vaccination and its loss given
by the corresponding effective reproduction number:
\begin{equation}
  \label{eq:bi-min}
  \min_{\Delta} (C,R_e).
\end{equation}
Let us now briefly summarize the results from~\cite{ddz-theo}. For the reader's
convenience we also collect the main points in Table~\ref{tab:not-PAP}, and provide plots
of typical Pareto and anti-Pareto frontiers in  Figure~\ref{fig:generic_frontiers}.
Note that Assumptions 4 and 5 in~\cite{ddz-theo} hold thanks to
\cite[Lemma~5.13]{ddz-theo}. By definition, we have $R_0=\max_\Delta \, R_e$ and we set $
\cmax=\max_\Delta C$ which is positive as $C$ is decreasing (and $\mu$ non-zero)  and
finite as $C$ is continuous and $\Delta$ compact. Related to the minimization
problem~\eqref{eq:bi-min}, we shall consider~$\mir$ the \emph{optimal loss} function
and~$\Cinf$ the \emph{optimal cost} function defined by:
\begin{align*}
  \mir (c) &= \min \, \set{ \loss(\eta) \, \colon \, \eta \in \Delta,\,
  C(\eta) \leq c }\quad\text{for $c\in [0,\cmax]$},\\
    \Cinf(\ell) &= \min \, \set{ C(\eta) \, \colon \, \eta \in \Delta,\, \loss(\eta) \leq
    \ell } \quad \text{for $\ell \in [0,R_0]$}.
\end{align*}
We have $\Cinf(R_0)=0$ and $\mir(0)=R_0$ since $C$ is decreasing. For convenience, we
    write $\cmir$ for the minimal cost such that $\mir$ vanishes:
\begin{equation}\label{eq:def-cmir}
  \cmir=\Cinf(0).
\end{equation}
The function $\mir$ is continuous, decreasing on $[0, \cmir]$ and zero on $[\cmir,
    1]$;
    the function $\Cinf$ is continuous and decreasing on $[0, R_0]$; and the functions
    $\mir$ and $\Cinf$ are the inverse of each other, that is, $\mir \circ
    \Cinf(\ell)=\ell$ for $\ell\in [0, R_0]$ and $\Cinf \circ \mir( c)=c$ for $c\in [0,
    \cmir]$.

We define the Pareto optimal strategies $\cp$ as the ``best'' solutions of the
minimization problem~\eqref{eq:bi-min} (we refer to \cite{ddz-theo} for a precise
justification of this terminology):
\[
  \cp=\left\{\eta\in \Delta\, \colon\, C(\eta)=\Cinf( R_e(\eta))
    \quad\text{and}\quad
    R_e(\eta) = \mir( C(\eta)) \right\}.
\]
We have in fact the following representation of the Pareto optimal strategies:
\begin{align*}
  \cp
  &=\left\{\eta\in \Delta\, \colon\, C(\eta)=\Cinf(R_e(\eta))\right\}\\
&  = \left\{\eta\in \Delta\, \colon\, R_e(\eta)=\mir(C(\eta))
    \quad\text{and}\quad C(\eta)\leq  \cmir
  \right\}.
\end{align*}
The Pareto frontier is defined as the outcomes of the Pareto optimal strategies:
\[
  \F=\left\{(C(\eta), R_e(\eta))\, \colon\, \eta\in \cp \right\}.
\]
The set $\cp$ is a non empty compact (for the weak topology) in $\Delta$ and furthermore
the Pareto frontier can be easily represented using the graph of the optimal loss function
or cost function:
\[
  \F
  = \{(\Cinf(\ell), \ell) \, \colon \, \ell \in [0,R_0]\}
  = \{(c, \mir(c)) \, \colon \, c \in [0,\cmir]\}.
\]
It is also of interest to consider the ``worst'' strategies
which can be viewed as solutions to the bi-objective maximization
problem:
\begin{equation}\label{eq:bi-max}
  \max_{\Delta} (C,R_e).
\end{equation}
The next results can be found in \cite[Propositions~5.8
and~5.9]{ddz-theo}. Note therein  that Assumption 6 holds in
general but that Assumption 7 holds under the stronger condition
that the kernel $\kk$ is monatomic; see Section~5.4.2.
Related to the maximization problem~\eqref{eq:bi-max}, we shall
consider~$\mar$ the \emph{optimal loss} function and~$\Csup$ the
\emph{optimal cost} function defined by:
\begin{align*}
\mar (c) &= \max \, \set{ \loss(\eta) \, \colon \, \eta \in \Delta,\,
  C(\eta) \geq c }\quad\text{for $c\in [0,\cmax]$},\\
  \Csup(\ell) &= \max \, \set{ C(\eta) \, \colon \, \eta \in \Delta,\,
    \loss(\eta) \geq \ell } \quad \text{for $\ell \in [0,R_0]$}.
\end{align*}
We have $\Csup(0)=\cmax$ and $\mar(\cmax)=0$ since $C$ is decreasing and
$C(\zero)=\cmax$. Since, for $\varepsilon\in (0, 1)$ we have
$C(\varepsilon \un )<\cmax$ as $C$ is decreasing and
$R_e(\varepsilon \un)=\varepsilon R_0>0$, we deduce that
$\Csup(0+)=\cmax$. For convenience, we write $\cmar$ for the maximal
cost of totally inefficient strategies:
\begin{equation}
    \label{eq:def-cmar}
  \cmar = \Csup(R_0)=\max \{ c \in [0,\cmax] \, \colon \, \mar(c) = R_0 \}.
\end{equation}
The function $\Csup$ is decreasing on $[0, R_0]$;
the function $\mar$ is constant equal to $R_0$ on $[0, \cmar]$; we have
$\mar \circ \Csup(\ell)=\ell$ for $\ell\in [0, R_0]$.
  This latter
property implies that the function $\mar$ is continuous.

We define the anti-Pareto optimal strategies $\cpa$ as the ``worst''
strategies, that is solutions of
the maximization problem~\eqref{eq:bi-max}:
\[
 \cpa
 =\left\{\eta\in \Delta\, \colon\, C(\eta)=\Csup(R_e(\eta))
\quad   \text{and}\quad
   R_e(\eta)=\mar(C(\eta)) 
\right\}.
\]
We have in fact the following representation of the anti-Pareto optimal strategies:
\begin{align*}
  \cpa
  &=\left\{\eta\in \Delta\, \colon\, C(\eta)=\Csup(R_e(\eta))\right\}\\
&  =\left\{\eta\in \Delta\, \colon\, R_e(\eta)=\mar(C(\eta)) 
    \quad\text{and}\quad C(\eta)\geq  \cmar\right\}.
\end{align*}
The anti-Pareto frontier is defined as the outcomes of the anti-Pareto optimal strategies:
\[
  \AF=\left\{(C(\eta), R_e(\eta))\, \colon\, \eta\in \cpa \right\}.
\]
The set $\cpa$ is non empty
and furthermore the Pareto frontier can be easily represented using the
graph of the optimal cost function:
\begin{equation}
   \label{eq:FL=L*}
  \AF
   = \{(\Csup(\ell), \ell) \, \colon \, \ell \in [0,R_0]\}.
 \end{equation}
\medskip

We also have that the feasible region or set of possible outcomes for $(C, R_e)$:
\[
  \FF=\left\{(C(\eta), R_e(\eta))\, \colon \, \eta\in \Delta \right\}
\]
is compact,
  path connected, and its complement is connected in $\R^2$. It is the whole region
  between the graphs of the one-dimensional value functions:
  \begin{align*}
    \FF &= \{ (c,\ell) \in [0, \cmax]\times [0, R_0]\,\colon\,
    \mir(c) \leq \ell \leq \mar(c) \} \\
	&= \{ (c,\ell) \in [0, \cmax]\times [0, R_0] \,\colon\,
	\Cinf(\ell) \leq c \leq \Csup(\ell)\}.
  \end{align*}
We plotted in Figure~\ref{fig:generic_frontiers} the typical Pareto and anti-Pareto
frontiers for a general kernel (notice the anti-Pareto frontier is not connected \textit{a
  priori}). In Section~\ref{sec:reg-AF},  we check that reducibility conditions on
the kernel $\kk$ provide further properties on the frontiers.

\section{Optimal ray and optimal strategies which eradicate the epidemic}
\label{sec:ray-eradicate}

We introduced in Section~\ref{sec:P-et-AP} the bi-objective minimization/maximization
problems, where one tries to minimize/maximize simultaneously the cost of the vaccination
and the effective reproduction number. In Section~\ref{sec:ray}, we derive the existence
of Pareto optimal rays as soon as there exists a Pareto optimal strategy uniformly
strictly bounded from above by $1$; and in Section~\ref{sec:independance} we give a
characterization of $\cmir$ using the notion of independent set from graph
theory.

\subsection{Optimal ray}\label{sec:ray}

If the loss function $R_e$ is convex  and if the cost function is affine, then the set
$\mathcal{P}$ of Pareto optimal strategies may contain a non-trivial optimal ray
$\{\lambda \eta\, \colon\, \lambda\in [0, 1]\}$. This optimal ray has already been
observed in finite dimension \cite{poghotanyan_constrained_2018}. We also refer
to~\cite{ddz-Re} for sufficient condition on the kernel $\kk$ for the function $R_e[\kk]$
to be convex or concave.

\begin{proposition}[Optimal ray]\label{prop:critical-ray}
  %%% PROPOSITION: OPTIMAL RAY
  Suppose that the cost function $C$ takes the form:
  \[
    C(\eta) = \cmax - \int_\Omega \eta c \, \rd \mu
    \quad\text{with}\quad
    \cmax= \int_\Omega  c \, \rd \mu,
  \]
  for a positive function $c\in L^1$,
    and that the loss
    function $R_e[\kk]$, with $\kk$ a finite double norm kernel, is convex.
    If $\eta_\star \in \mathcal{P}$ is a Pareto optimal strategy that
    satisfies $\eta_\star<1$, $\mu$-a.e., then, for all  $\lambda\geq 0$, the strategy
    $\lambda    \eta_\star$   is    Pareto    optimal    as   soon    as
    $\lambda \eta_\star\in \Delta$.

    In particular, the Pareto frontier contains the segment joining the points of
    coordinates
    $(\cmax,0)$ and $(C(\eta_\star/\sup\eta_\star), \loss( \eta _\star/\sup\eta_\star))$.
    We also have $\cmir=\cmax$.
  \end{proposition}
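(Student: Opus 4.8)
The plan is to exploit two structural facts simultaneously: the cost $C$ is affine, so that $C(\lambda\eta_\star)=(1-\lambda)\cmax+\lambda\,C(\eta_\star)$, and $R_e$ is positively homogeneous of degree one, so that $R_e(\lambda\eta_\star)=\lambda\,R_e(\eta_\star)$ whenever $\lambda\eta_\star\in\Delta$ (this follows from $T_{\kk\,\lambda\eta}=\lambda\,T_{\kk\eta}$ together with~\eqref{eq:def-rho}, extending the homogeneity of Proposition~\ref{prop:R_e} to every admissible $\lambda\geq 0$). Writing $\ell_\star=R_e(\eta_\star)$, the outcomes $(C(\lambda\eta_\star),R_e(\lambda\eta_\star))$ therefore describe, as $\lambda$ runs over $[0,1/\sup\eta_\star]$, the line segment issued from $(\cmax,0)$ at $\lambda=0$, passing through $(C(\eta_\star),\ell_\star)$ at $\lambda=1$, up to the outcome of $\eta_\star/\sup\eta_\star$. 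Each such outcome is feasible, so the whole content is to prove that these points are Pareto optimal.

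Since $C$ is affine and $R_e$ is convex, the optimal cost function $\Cinf$ is convex and decreasing on $[0,R_0]$ (a convex combination of near-optimal strategies for $\ell_1$ and $\ell_2$ is admissible for any intermediate loss level), and its graph is precisely the Pareto frontier $\F$. As $\eta_\star\in\cp$, we have $C(\eta_\star)=\Cinf(\ell_\star)$. Let $\theta_\star\geq 0$ be minus a subgradient of the convex function $\Cinf$ at $\ell_\star$, so that $\Cinf(\ell)\geq C(\eta_\star)+\theta_\star(\ell_\star-\ell)$ for all $\ell\in[0,R_0]$; since $C(\eta)\geq \Cinf(R_e(\eta))$ for every $\eta\in\Delta$, this immediately shows that $\eta_\star$ minimizes $C+\theta_\star R_e$ over $\Delta$. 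The argument then reduces entirely to the scalar identity $\langle c,\eta_\star\rangle=\theta_\star\ell_\star$: granting it and evaluating the lower bound at $\ell=\lambda\ell_\star$ gives $\Cinf(\lambda\ell_\star)\geq C(\eta_\star)+\theta_\star\ell_\star(1-\lambda)=(1-\lambda)\cmax+\lambda\,C(\eta_\star)=C(\lambda\eta_\star)$, while feasibility gives the reverse inequality; hence $C(\lambda\eta_\star)=\Cinf(R_e(\lambda\eta_\star))$ and $\lambda\eta_\star\in\cp$. Letting $\lambda\to 0$ (i.e. $\ell\to 0$) yields $\Cinf(0)=\cmax$, that is $\cmir=\cmax$.

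It remains to establish $\langle c,\eta_\star\rangle=\theta_\star\ell_\star$, where $\langle c,\eta_\star\rangle=\int_\Omega \eta_\star c\,\rd\mu=\cmax-C(\eta_\star)$. Inserting the admissible strategies $\lambda\eta_\star$ with $\lambda\in[0,1)$ into the minimization of $C+\theta_\star R_e$ and dividing by $1-\lambda$ gives the inequality $\langle c,\eta_\star\rangle\geq\theta_\star\ell_\star$; this is the easy half and simply reflects that scaling $\eta_\star$ down is always admissible. The reverse inequality is the crux, and it is here that the hypothesis $\eta_\star<1$ $\mu$-a.e.\ intervenes. Expressing the optimality of $\eta_\star$ for the concave problem $\max_{\eta\in\Delta}\bigl(\langle c,\eta\rangle-\theta_\star R_e(\eta)\bigr)$ through a (sub)gradient $\psi$ of $R_e$ at $\eta_\star$, the stationarity condition states that $c-\theta_\star\psi$ lies in the normal cone of the order interval $\Delta$ at $\eta_\star$. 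Because $\eta_\star<1$ $\mu$-a.e., the upper constraint $\eta\leq\un$ is inactive almost everywhere, so testing against directions $\ind{B}$ with $B\subset\{\eta_\star<1\}=\Omega$ forces $c\leq\theta_\star\psi$, $\mu$-a.e.; pairing with $\eta_\star\geq 0$ and invoking the Euler identity $\langle\psi,\eta_\star\rangle=R_e(\eta_\star)=\ell_\star$ (valid since $R_e$ is homogeneous of degree one) yields $\langle c,\eta_\star\rangle\leq\theta_\star\ell_\star$, as desired.

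The main obstacle is exactly this last step: one must ensure that $R_e$ admits at $\eta_\star$ a subgradient represented by a genuine function, so that the pointwise inequality $c\leq\theta_\star\psi$ is meaningful and can be paired with $c\in L^1$. This rests on the differentiability of the spectral radius, typically via simplicity of the leading eigenvalue and Krein--Rutman theory, for which I would appeal to~\cite{ddz-Re}. The degenerate case $\ell_\star=0$ should be treated separately: homogeneity then gives $R_e(\lambda\eta_\star)=0$ for all $\lambda$, and Pareto optimality of $\eta_\star$ forces $\sup\eta_\star=1$ (otherwise scaling up would strictly decrease the cost at loss $0$), from which $\cmir=\cmax$ again follows. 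Once the identity is secured, the two displayed inequalities pin $\Cinf$ to be affine on $[0,\ell_\star]$, so the segment lies on $\F$, and the choice $\lambda=1/\sup\eta_\star$ produces the announced endpoints.
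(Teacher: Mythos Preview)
Your overall strategy—linearize via a subgradient of the convex value function $\Cinf$, reduce to the scalar identity $\langle c,\eta_\star\rangle=\theta_\star\ell_\star$, and then check that this identity propagates to all $\lambda\eta_\star$—is sound, and the computations you display are correct once the identity is granted. The gap is precisely the one you flag yourself: the inequality $\langle c,\eta_\star\rangle\leq\theta_\star\ell_\star$ hinges on producing a subgradient $\psi$ of $R_e$ at $\eta_\star$ that is a genuine function, so that the normal-cone argument yields the pointwise inequality $c\leq\theta_\star\psi$ and Euler's identity can be paired with it. Your appeal to simplicity of the leading eigenvalue via Krein--Rutman is not available here: the proposition assumes only that $R_e[\kk]$ is convex, with no irreducibility or positivity hypothesis on $\kk$, so the top eigenvalue need not be simple and $R_e$ need not be differentiable. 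In the $L^\infty$ setting the abstract subdifferential of a convex function lives in $(L^\infty)^*$, which contains singular (purely finitely additive) elements, and nothing in your argument rules these out; likewise the normal cone $N_\Delta(\eta_\star)$ may contain singular parts. So as written the crux step is not justified.

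The paper's proof bypasses differentiability altogether and uses only the convexity \emph{inequality}. Given a competitor $\xi$ with $R_e(\xi)\leq R_e(\lambda\eta_\star)$, it sets $\eta=\xi/\lambda$ and builds the truncated convex combinations
\[
  \eta_n=\min\!\big((1-n^{-1})\eta_\star+n^{-1}\eta,\;1\big)\in\Delta,
\]
so that monotonicity, convexity and homogeneity of $R_e$ give $R_e(\eta_n)\leq R_e(\eta_\star)$. Pareto optimality of $\eta_\star$ then yields $\int\eta_n c\,\rd\mu\leq\int\eta_\star c\,\rd\mu$, and an elementary dominated-convergence argument—this is exactly where $\eta_\star<1$ $\mu$-a.e.\ enters, to kill the truncation error $n(u_n-1)\ind{\{u_n>1\}}$—passes to the limit and gives $\int\xi c\,\rd\mu\leq\int\lambda\eta_\star c\,\rd\mu$. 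No subgradients, no eigenvalue simplicity, and no duality in $(L^\infty)^*$ are needed.
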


\begin{remark}
  %% REMARK
  Suppose that $C$ takes the form given in the Proposition and that $R_e[\kk]$, with $\kk$
  a finite double norm kernel, is concave. With a similar proof (but for the last part
  which has to be replaced by the fact that $\Csup(0+)=\cmax$ as the set of anti-Pareto
  optimal strategies might not be closed), it is easy to get that if $\eta^\star$ is
  anti-Pareto optimal such that $\eta^\star<1$ $\mu$-a.e., then,  for all $\lambda\geq 0$,
  the strategy $\lambda \eta^\star$ is anti-Pareto optimal as soon as $\lambda
  \eta^\star\in \Delta$.
\end{remark}

\begin{proof}[Proof of Proposition~\ref{prop:critical-ray}]
  Assume that $\eta_\star\in\mathcal{P}$ satifies $\eta_\star <1$ $\mu$-a.e., and
  $\xi_\star\in\Delta$ is a multiple of $\eta_\star$, say
  $\xi_\star=\lambda\eta_\star$.  Assume for now that $\lambda>0$.
  Our goal is to prove that $\xi_\star$ is Pareto optimal.  Let
  $\xi\in\Delta$ be such that $\loss(\xi)\leq \loss(\xi_\star)$: by
  \cite[Proposition~5.5~(ii)]{ddz-theo}, it is enough to show that
  necessarily, $C(\xi)\geq C(\xi_\star)$, or equivalently that
  $\int_\Omega \xi c \,\rd\mu \leq \int_\Omega \xi_\star c\,\rd\mu$.

  To use the optimality of $\eta_\star$, we construct an auxiliary strategy:
  \[ \eta_n = \min\big( (1-n^{-1})\eta_\star + n^{-1} \eta ; 1\big),\]
  where $n\in \N^*$ and  $\eta = \xi/\lambda$ (note that $\eta\notin\Delta$ in general).
  By monotony, convexity and homogeneity of $\loss$, and the fact that $\loss(\xi)\leq \loss(\xi_\star)$ by hypothesis,  we get:
  \begin{align*}
    \loss(\eta_n) &\leq (1-n^{-1}) \loss(\eta_\star) + n^{-1} \loss(\eta) \\
 %                 & = (1- n^{-1}) \loss(\eta_\star) + \frac{1}{n\lambda} \loss (\xi) \\
                  &\leq (1-n^{-1}) \loss(\eta_\star) + \frac{1}{n\lambda} \loss(\xi_\star) \\
                  &= \loss(\eta_\star).
  \end{align*}
  Since $\eta_\star$ is optimal, this implies $C(\eta_n)\geq C(\eta_\star)$, so
  $  \int_\Omega \eta_\star c \,\rd\mu
  \geq  \int_\Omega \eta_n  c \,\rd\mu $.
  We now compute the right hand side, defining $u_n = (1-n^{-1})\eta_\star +
  n^{-1}\eta$, we get:
  \begin{align*}
    \int_\Omega \eta_\star c\,\rd\mu \geq\int_\Omega \eta_n c\,\rd\mu
%    &= \int u_n \ind{u_n\leq 1} c\,\rd\mu + \int \ind{u_n>1} c\,\rd\mu \\
    &= \int_\Omega u_n c\,\rd\mu - \int_\Omega(u_n - 1) \ind{\{u_n>1\}} c\,\rd\mu \\
    &= (1-n^{-1}) \int_\Omega \eta_\star c\,\rd\mu  + n^{-1} \int_\Omega \eta c\,\rd\mu  -
      \int_\Omega(u_n - 1) \ind{\{u_n>1\}} c\,\rd\mu.
  \end{align*}
Rearranging the terms,  we arrive at:
\[
  \int _\Omega \eta c\,\rd\mu
  \leq \int_\Omega \eta_\star c\,\rd\mu + n\int_\Omega(u_n - 1)
  \ind{\{u_n>1\}} c\,\rd\mu.
\]
Elementary computations give that:
\[
0\leq   n(u_n-1) \ind{\{u_n>1\}}\leq \eta \ind{\{n< (\eta -\eta^\star)/(1
  -\eta^\star)\}}.
\]
Since $\mu$-a.e.  $\eta^\star<1$, this implies that
$\mu$-a.e. $\lim_{n\rightarrow \infty }n(u_n -1)
\ind{\{u_n>1\}}=0$. By dominated convergence, we obtain
$\lim_{n\rightarrow\infty } n\int_\Omega(u_n - 1) \ind{\{u_n>1\}}
c\,\rd\mu=0$ and thus:
\[
  \int_\Omega \eta c\,\rd\mu \leq \int_\Omega \eta_\star c\,\rd\mu,
\]
and, multiplying by $\lambda$, we get $\int_\Omega \xi c\,\rd\mu \leq
\int_\Omega \xi_\star c\,\rd\mu$, as claimed.
  Finally, the statement still holds for $\xi_\star = 0$ by letting
  $\lambda$ go down  to zero and  using the fact that  the Pareto optimal
  set is closed \cite[Corollary~5.7]{ddz-theo}.
\end{proof}

\subsection{A characterization of \texorpdfstring{$\cmir=\Cinf(0)$}{C*(0)}
  when the support of \texorpdfstring{$\kk$}{k} is symmetric}
\label{sec:independance}

We characterize the Pareto optimal strategies which minimize $R_e$ when the kernel $\kk$
has a symmetric support, and get a very simple representation of
$\Cinf(0)$ when $\mu$ is finite and the
cost is uniform.

\medskip

Let us first recall a notion from graph theory. If $G=(V,E)$ is an
non-oriented graph with vertices set $V$ and edge set $E$, an
\emph{independent} set of $G$ is a subset $A\subset V$ of vertices
which are pairwise not adjacent, that is, $i,j\in A$ implies
$i j\not\in E$.

Following~\cite{hladky_independent_2020},
we generalize this definition to kernels.

\begin{definition}[Independent sets for kernels]\label{def:indep-number}
  %%% DEFINITION : INDEPENDENT SET FOR KERNEL
  Let $\kk$ be a kernel on $\Omega$. A measurable set $A \in \cf$ is an independent set of
  $\kk$ if $\kk=0$ $\mu^{\otimes 2}$-a.e.\ on $A\times A$.
\end{definition}

In the following result, we prove that ``maximal'' independent sets provide optimal 
Pareto strategies for the loss function $R_e$  and the cost function $C$. This property is
illustrated in Figure~\ref{fig:perf} with  the uniform  cost $C=\costu$ given
by~\eqref{eq:def-C},  where  the Pareto  frontier of  the non-oriented cycle graph from
Example~\ref{ex:cycle-graph}, with $N=12$, is plotted; it is possible to prevent
infections without vaccinating the whole population as $\cmir=1/2<1=\cmax$.

\begin{proposition}\label{prop:CRe(0)}
  %%% PROPOSITION : C*(0) = alpha
  Let $\kk$ be a finite double norm kernel on $\Omega$ such that its support, $\{\kk>0\}$,
  is a symmetric subset of $\Omega^2$ a.e. We have:
  \begin{equation}\label{eq:cmir} \cmir= \Cinf(0) = \min \{C(\ind{A})\,\colon\, \text{$A$
    is an independent set of $\kk $}\}.
  \end{equation}
  Furthermore if $\eta_\star$ is Pareto optimal such that $R_e[\kk](\eta_\star)=0$, then
  $\{\eta_\star>0\}$ is an independent set, $\eta_\star=\ind{\{\eta_\star>0\}}$ a.e.\ and
  $\cmir=C(\ind{\{\eta_\star>0\}})$.
\end{proposition}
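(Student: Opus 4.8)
The plan is to reduce the whole statement to the single equivalence
\[
R_e[\kk](\eta)=0 \quad\Longleftrightarrow\quad \{\eta>0\}\ \text{is an independent set of } \kk,
\]
and then to read off the two displayed claims using only that $C$ is decreasing. To establish the equivalence I would symmetrize: set $\kk_\eta=\sqrt{\eta}\,\kk\,\sqrt{\eta}$, i.e.\ $\kk_\eta(x,y)=\sqrt{\eta(x)}\,\kk(x,y)\sqrt{\eta(y)}$. Writing $M_{\sqrt\eta}$ for the corresponding multiplication operator, the cyclic identity \eqref{eq:r(AB)} gives $\rho(T_{\kk\eta})=\rho(T_\kk M_\eta)=\rho(M_{\sqrt\eta}T_\kk M_{\sqrt\eta})=\rho(T_{\kk_\eta})$, so $R_e[\kk](\eta)=\rho(T_{\kk_\eta})$. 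Moreover $\{\kk_\eta>0\}=(A\times A)\cap\{\kk>0\}$ with $A=\{\eta>0\}$, which is symmetric because $\{\kk>0\}$ is, and $A$ is an independent set of $\kk$ exactly when $\kk_\eta=0$ $\mu^{\otimes2}$-a.e.

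The easy implication is then immediate: if $A$ is independent, $\kk_\eta=0$ a.e., so $T_{\kk_\eta}=0$ and $R_e[\kk](\eta)=0$. The converse is the heart of the matter. Suppose $\rho(T_{\kk_\eta})=0$ but $\mu^{\otimes2}(\{\kk_\eta>0\})>0$. The key idea is to pass to a symmetric minorant on which the operator becomes self-adjoint: the kernel $g(x,y)=\min\bigl(\kk_\eta(x,y),\kk_\eta(y,x)\bigr)$ is symmetric, satisfies $0\leq g\leq \kk_\eta$, and, because $\{\kk_\eta>0\}$ is symmetric, has the same support as $\kk_\eta$, hence $\mu^{\otimes2}(\{g>0\})>0$. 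Using $\sigma$-finiteness of $\mu$ and the decomposition $\{g>0\}=\bigcup_n\{g>1/n\}$, I would extract a symmetric, bounded, finite-measure-supported sub-kernel $g'=\tfrac{1}{n}\,\ind{\{g>1/n\}\cap(D\times D)}$ (with $\mu(D)<\infty$) that is nonzero and satisfies $g'\leq g\leq\kk_\eta$. Such a $g'$ is Hilbert--Schmidt, so $T_{g'}$ is a nonzero self-adjoint compact operator on $L^2$, whence $\rho(T_{g'})=\norm{T_{g'}}_{L^2}>0$. On the other hand $0\leq g'\leq\kk_\eta$ gives, by the monotonicity \eqref{eq:r(A)r(B)}, $\rho(T_{g'})\leq\rho(T_{\kk_\eta})=0$; since the nonzero spectrum of $T_{g'}$ is the same whether it acts on $L^2$ or on the $L^p$ on which $R_e$ is computed, this is a contradiction. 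Thus $\kk_\eta=0$ a.e.\ and $A$ is independent. I expect the one genuinely delicate point to be this last matching of spectral radii across $L^p$ and $L^2$, together with checking that $T_{g'}$ is well defined and compact on $L^p$.

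With the equivalence in hand, $\cmir=\Cinf(0)=\min\{C(\eta):R_e[\kk](\eta)=0\}=\min\{C(\eta):\{\eta>0\}\text{ independent}\}$, the minimum being attained by continuity of $C$ and compactness of $\Delta$. For any feasible $\eta$ put $A=\{\eta>0\}$; then $\eta\leq\ind{A}$, the set $A$ is independent so $\ind{A}$ is feasible too, and since $C$ is decreasing $C(\ind{A})\leq C(\eta)$. Hence the minimum over feasible $\eta$ equals the minimum over indicators of independent sets, which is \eqref{eq:cmir}. Finally, if $\eta_\star$ is Pareto optimal with $R_e[\kk](\eta_\star)=0$, then $A_\star=\{\eta_\star>0\}$ is independent by the equivalence; the Pareto characterization $C(\eta_\star)=\Cinf(R_e[\kk](\eta_\star))=\Cinf(0)=\cmir$ combined with $\ind{A_\star}\geq\eta_\star$ gives $C(\ind{A_\star})\geq\Cinf(0)=C(\eta_\star)\geq C(\ind{A_\star})$, forcing $C(\ind{A_\star})=C(\eta_\star)$; as $C$ is \emph{strictly} decreasing this yields $\mu(\eta_\star<\ind{A_\star})=0$, i.e.\ $\eta_\star=\ind{\{\eta_\star>0\}}$ a.e., and therefore $\cmir=C(\ind{\{\eta_\star>0\}})$.
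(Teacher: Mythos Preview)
Your proof is correct and follows the same overall architecture as the paper: reduce everything to the equivalence $R_e[\kk](\eta)=0\Leftrightarrow\{\eta>0\}$ is independent, obtain the non-trivial implication by minorizing with a symmetric kernel, exploit self-adjointness on $L^2$, and bridge the $L^p$/$L^2$ gap (the paper cites Davies for exactly the point you flag as delicate; for your $g'$, which is bounded and supported in $D\times D$ with $\mu(D)<\infty$, this is even easier since any eigenfunction is automatically in every $L^r$).

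The technical route differs in two respects. First, the paper builds its symmetric minorant as $\kk'=(\eta f)\,\ind{\{\kk>0\}}\,(\eta f)$, using the symmetry of the support of $\kk$ directly; to stay below $\kk\eta$ it must first pass through the truncations $\kk_\varepsilon=(\eta f)\,\ind{\{\kk\geq\varepsilon\}}\,(\eta f)$ and invoke the stability of $R_e$ in the double norm to send $\varepsilon\to 0$. You avoid this limit entirely: you first conjugate to $\kk_\eta=\sqrt{\eta}\,\kk\,\sqrt{\eta}$ via~\eqref{eq:r(AB)}, then symmetrize by $g=\min(\kk_\eta,\kk_\eta^{\top})$ (which has the same support because $\{\kk_\eta>0\}$ is symmetric), and truncate once to obtain a bounded Hilbert--Schmidt $g'$. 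Second, your argument is by contradiction (nonzero self-adjoint compact $\Rightarrow$ positive spectral radius), whereas the paper argues directly that the limiting symmetric operator vanishes. Both land on the same $L^p$/$L^2$ identification; your packaging is slightly more streamlined in that it needs neither the auxiliary $f\in L^1\cap L^\infty$ nor the stability result for $R_e$.
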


\begin{proof}
  Let  $A$ be  an independent  set.  The  effective reproduction  number
  obviously     vanishes    for     the     strategy    $\ind{A}$     as
  $(T_{\kk \ind{A}})^2=T_\kk\,  T_{\ind{A} \kk \ind{A}}=0$.   This
  gives:
\begin{equation}
   \label{eq:cmir2}
    \cmir\leq  \inf \{C(\ind{A})\,\colon\,  \text{$A$ is  an independent
      set of $\kk $}\}.
  \end{equation}

\medskip

Now, let $\eta  \in \Delta$ be such that $R_e[\kk](\eta)  = 0$. We shall
prove    that   $\{    \eta>0\}$   is    an   independent    set.    Let
$f\in L^1\cap L^\infty  $ such that $0<f\leq 1$. Notice  that $f\in L^r$
for   all   $r\in   [1,   +\infty  ]$.    Let   $\varepsilon>0$.   Since
$\kk \eta \geq \varepsilon \kk_\varepsilon$, with
$\kk_\varepsilon= (\eta f)\, \ind{\{\kk \geq \varepsilon\}}
  \,          (\eta          f)$, that is:
\[
  \kk_\varepsilon(x,y)= (\eta f)(x)\, \ind{\{\kk(x,y) \geq \varepsilon\}}
  \,          (\eta          f)(y),
\]
we  get  that  $T_{\kk\eta}-  \varepsilon  T_{\kk_\varepsilon  }$  is  a
positive    operator,   and    deduce   from~\eqref{eq:r(A)r(B)}    that
$\varepsilon      \rho(T_{\kk_\varepsilon})=      \rho      (\varepsilon
T_{\kk_\varepsilon})    \leq    \rho     (T_{\kk\eta})=0$    and    thus
$R_0[\kk_\varepsilon]=0$.                                            Set
$\kk'  = (\eta  f)\,  \ind{\set{\kk>0}}\, (\eta  f)$,  which has  finite
double            norm            in            $L^p$.             Since
$\lim_{\varepsilon\rightarrow   0+}\norm{\kk_\varepsilon   -   \kk'}_{p,
  q}=0$,   we  deduce   from  \cite[Proposition~4.3]{ddz-theo}   on  the
stability                 of                  $R_e$                 that
$R_0[\kk']          =          \lim_{\varepsilon\rightarrow          0+}
R_0[\kk_\varepsilon]=0$. As the support of $\kk$ is symmetric, we deduce
that the non-negative  kernel $\kk'$ is symmetric. Since  $f\in L^2$, we
deduce  that $\kk'$  has  finite  double norm  on  $L^2$.  According  to
Theorem~4.2.15 and  Problem 2.2.9~p.~49 in \cite{Davies07},  we get that
the integral operator $T_{\kk'}$ on  $L^p$ and the integral operator $T$
on $L^2$ with (the same) kernel  $\kk'$ have the same spectrum, and thus
their  spectral radius  is zero.  Since  $T$ is  self-adjoint with  zero
spectral radius, we deduce that $T=0$  and thus a.e. $\kk'=0$. Since $f$
is     positive,     we     deduce      that     $\kk=0$     a.e.     on
$\{\eta>0\}\times \{\eta>0\}$,  and thus $\{\eta>0\}$ is  an independent
set.

We now prove that the  inequality in~\eqref{eq:cmir2} is an equality and
that  the infimum  is reached.   Let  $\eta_\star$ be  a Pareto  optimal
strategy   such   that   $    R_e[\kk](\eta_\star)   =   0$   and   thus
$\cmir=C(\eta_\star)$.   We  deduce  from  the  previous  argument  that
$\{\eta_\star>0\}$     is    an     independent     set;    and     thus
$  R_e[\kk](\ind{\{\eta_\star>0\}})=0$.   Using   the  monotonicity  and
continuity     of     the     cost     function,     we     get     that
$C(\eta_\star)        \geq       C(\ind{\{\eta_\star>0\}})$        since
$\eta_\star\leq    \ind{\{\eta_\star>0\}}$.      This    implies    that
$\ind{\{\eta_\star>0\}}$    is    Pareto     optimal    as    well    as
$C(\eta_\star) = C(\ind{\{\eta_\star>0\}})$.  This gives the claim.

Using the monotonicity of $C$, we also deduce from the equality
$C(\eta_\star) = C(\ind{\{\eta_\star>0\}})$
that a.e.  $\eta_\star=\ind{\{\eta_\star>0\}}$. This ends the proof.
\end{proof}

\begin{remark}[On the independence number]\label{rem:indep}
  %%% REMARK
  The \emph{independence number} of a graph $G$, denoted by $\alpha(G)$, is the maximum of
  $\sharp A$, over all the independent sets $A$ of $G$. Similarly, if $\mu$ is
  a finite measure, we can  define the independence number $\alpha(\kk )$ of the kernel
  $\kk $ by:
  \[
    \alpha(\kk ) = \sup\{ \mu(A)\,\colon\, \text{$A$ is an independent set of
  $\kk $}\},
  \]
  and we say that $A$ is a maximal independent set for $\kk$ if
  $\mu(A)=\alpha(\kk)$.
  Consider the uniform cost $C=\costu $ given by~\eqref{eq:def-C} and a finite double norm 
  kernel  $\kk$  on $\Omega$ such that its support, $\{\kk>0\}$, is a symmetric subset of
  $\Omega^2$ a.e. Then,  we deduce from Proposition~\ref{prop:CRe(0)}, that any Pareto
  optimal strategy $\ind{A_\star}$ for the loss $R_e[\kk]$  corresponds to a maximal
  independent set $A_\star$ of $\kk$ and \emph{vice versa}. In particular, we have:
  \[ \cmir= \Cinf(0) = C( \ind{A_\star})= \cmax -\alpha(\kk). \]
\end{remark}

\section{Atomic decomposition and  cordons sanitaires}
\label{sec:reg-AF}

Following~\cite{schwartz61} and the presentation given in~\cite{ddz-Re}, we recall the
decomposition of the kernel into its irreducible components in Section~\ref{sec:atom}.
Then, in Section~\ref{sec:q-irr}, we complete the properties related to the anti-Pareto
frontier  for kernels having only one irreducible component. We prove in
Section~\ref{sec:cordon} that creating a \emph{cordon sanitaire} is not anti-Pareto 
optimal. Finally, considering reducible kernels in Section~\ref{sec:reducible}, we provide
a decomposition of the optimal cost and loss functions (related to the anti-Pareto and 
Pareto frontiers) by considering  the  corresponding optimization problems on the
irreducible components.

\subsection{Atomic decomposition}\label{sec:atom}

We  follow the  presentation in~\cite[Section~5]{ddz-Re}  on the  atomic decomposition of
positive compact operator and Remark 5.2  therein for the  particular case of integral
operators; see also the  references therein for  further results. Let $\kk$  be a kernel
on  $\Omega$ with a finite double norm.   For $A, B\in \cf$, we write  $A\subset B$ a.e.\
if $\mu(B^c  \cap  A)=0$  and  $A=B$   a.e.\  if  $A\subset  B$  a.e.\  and $B\subset A$
a.e. For  $A, B\in  \cf$, $x\in  \Omega$, we  simply write $\kk(x,A)=\int_{A} \kk(x,y)\,
\mu(\rd y)$, $ \kk(B,x)=\int_{ B} \kk(z,x)\, \mu(\rd z)$ and:
\[
  \kk(B, A)=
  \int_{B \times A} \kk(z,y)\, \mu(\rd z) \mu(\rd y).
\]
A   set  $A\in   \cf$  is   called  \emph{$\kk$-invariant},   or  simply \emph{invariant}
when there  is no  ambiguity on  the kernel  $\kk$, if $\kk(A^c,  A)=0$.  In  the
epidemiological  setting,  the  set  $A$  is invariant if the  sub-population $A$ does not
infect the sub-population $A^c$.  The kernel $\kk$  is \emph{irreducible} (or
\emph{connected}) if any invariant set $A$ is such  that $\mu(A)=0$ or $\mu(A^c)=0$. If
$\kk$ is irreducible, then either $R_0[\kk]>0$  or $k\equiv 0$ and $\Omega$ is an  atom of
$\mu$  in  $\cf$  (degenerate  case). A  simple  sufficient condition for irreducibility
is for the kernel to be positive a.e.

Let $\ca$ be  the set of $\kk$-invariant sets, and  notice that $\ca$ is
stable   by  countable   unions   and   countable  intersections.    Let
$\cfi=\sigma(\ca)$ be the $\sigma$-field  generated by $\ca$.  Then, the
operator $\kk$ restricted to an atom  of $\mu$ in $\cfi$ is irreducible.
We shall only consider non degenerate  atoms, and say the atom (of $\mu$
in $\cfi$)  is non-zero if the  restriction of the kernel  $\kk$ to this
atom  is non-zero  (and thus  the spectral  radius of  the corresponding
integral operator is  positive).  We denote by $(\Omega_i,  i\in I)$ the
at most countable  (but possibly empty) collection of  non-zero atoms of
$\mu$  in $\cfi$.   Notice that  the atoms  are defined  up to  an a.e.\
equivalence  and can  be  chosen to  be  pair-wise disjoint.   According
to~\cite[Lemma~5.3]{ddz-Re}, we have the decomposition:
\begin{equation}
   \label{eq:decomp}
  R_e[\kk]= \max_{i\in I}   R_e[\kk_i]
\quad\text{where}\quad
  \kk_i= \ind{\Omega_i} \kk \ind{\Omega_i}.
\end{equation}
We represent in Figure~\ref{fig:atomic1} an example of a kernel $\kk$
with its atomic decomposition using a ``nice'' order on $\Omega$ (so
the kernel is upper block triangular: the population on the left of an atom does not infect the
population on the right of an atom)
in Figure~\ref{fig:atomic2} the corresponding kernel
$ \kk'=\sum_{i\in I} \kk_i$; thanks to~\eqref{eq:decomp}, the
kernels $\kk$ and $ \kk'$ have the same effective reproduction
function: $R_e[\kk]=R_e[\kk']= \max_{i\in I} R_e[\kk_i]$.

\begin{figure}
  \begin{subfigure}[T]{.5\textwidth}
    \centering
    \includegraphics[page=1]{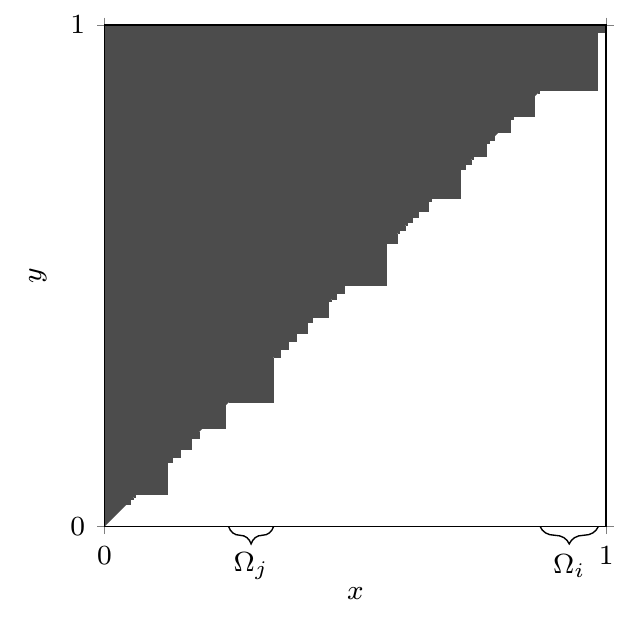}
    \caption{A representation of the kernel $\kk$ with the white zone included in
    $\{\kk=0\}$.}
    \label{fig:atomic1}
  \end{subfigure}%
  \begin{subfigure}[T]{.5\textwidth}
    \centering
    \includegraphics[page=2]{reducible}
    \caption{A representation of the kernel $\kk'=\sum_{i\in I}
      \kk_i$ with the white zone included in
    $\{\kk'=0\}$.}
    \label{fig:atomic2}
  \end{subfigure}
  \caption{Example of a  kernel $\kk$ on $\Omega=[0, 1]$  and the kernel
    $\kk'          =\sum_{i\in          I}         \kk_i$,          with
    $\kk_i(x,y)=\ind{\Omega_i}(x)\,  \kk(x,y)\,  \ind{\Omega_i}(y)$ and
    $(\Omega_i, i\in I)$ the non-zero atoms.  We
    have         $\spec(T_\kk)=\spec(T_{\kk'})$   as well as
    $R_e[T_\kk]=R_e[ T_{\kk'}]$.}
  \label{fig:atomic}
\end{figure}

We say the kernel $\kk$ is \emph{monatomic} if  there  exists a unique non-zero atom
($\sharp I=1$), and the kernel is \emph{quasi-irreducible} if it is monatomic, with
non-zero atom say $\oa$,  and $\kk\equiv 0$ outside $\oa\times \oa$. The quasi-irreducible
property is the  usual extension of the irreducible property in  the setting  of symmetric
kernels; and the monatomic property is the  natural generalization  to non-symmetric
kernels. We represented in Figure~\ref{fig:monatomic} a monatomic kernel $\kk$ with
non-zero atom say $\oa$ and in Figure~\ref{fig:quasi} the quasi-irreducible kernel
$\ka=\ind{\oa}\kk\ind{\oa}$ with the same atom; the set $\Omega$ being ``nicely ordered''
so that the representation of the kernels are upper triangular and the set $\oi$ in
Figure~\ref{fig:monatomic} corresponds to the sub-population infected by the atom $\oa$.

\begin{figure}
  \begin{subfigure}[T]{.5\textwidth}
    \centering
    \includegraphics[page=1]{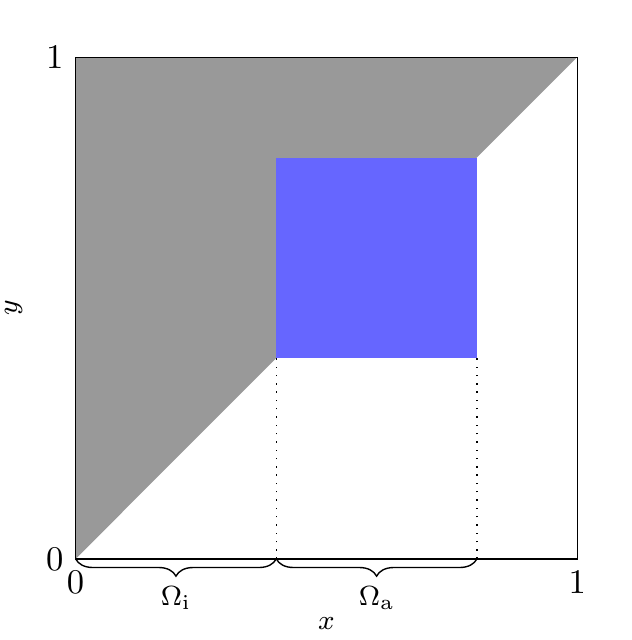}
    \caption{A representation of a monatomic kernel.}
    \label{fig:monatomic}
  \end{subfigure}%
  \begin{subfigure}[T]{.5\textwidth}
    \centering
    \includegraphics[page=2]{monatomic}
    \caption{A representation of a quasi-irreducible kernel.}
    \label{fig:quasi}
  \end{subfigure}
  \caption{Example of kernels $\kk$ and $\ka$ of a monatomic integral
    operator $T_\kk$ and the  quasi-irreducible
    integral operator $\Ta=T_{\ka}$ on $\Omega=[0, 1]$, with non-zero atom
    $\oa$. The kernels are zero on
    the white zone and  are irreducible when restricted to the blue zone.}
  \label{fig:monatomic-quasi}
\end{figure}

\subsection{The anti-Pareto frontier for irreducible and monatomic
  kernels}
\label{sec:q-irr}

We prove in the next result that for positive and/or irreducible
kernels, the gaps in~Table~\ref{tab:not-PAP} may essentially be
filled.  We illustrate these properties in Figure~\ref{fig:generic_frontiers}
by plotting the typical
Pareto and anti-Pareto frontiers for  irreducible kernels and positive
kernels. In order to avoid the degenerate  irreducible kernel,
we shall consider a  non-zero  kernel $\kk$, that is  a kernel such
that $\kk(\Omega,
\Omega)$ is positive.

\begin{proposition}[Consequences of irreducibility]
  \label{prop:k>0-c}
  %%% Prop
  Suppose that the cost function $C$ is continuous decreasing with $C(\un)=0$ and
  consider the loss function $R_e=R_e[\kk]$, with $\kk$ a finite double
  norm irreducible non-zero kernel. Then, we have the following properties:
  \begin{enumerate}[(i)]
  \item\label{item:k-mono}
    \begin{enumerate}[a)]
    \item\label{item:k-mono-mar}
      $R_0>0$.
      \item\label{item:k-mono-mar2} The function $\mar$ is continuous, decreasing
        on $[ \cmar, \cmax]$.
        \item The function $\Csup$ is continuous and decreasing
          on $[0, R_0]$.
        \item We have
$\Csup \circ \mar( c)=c$ for $c\in [ \cmar, \cmax]$.
\item\label{item:k-mono-AF}
  The set $\cpa$
is compact (for the weak-* topology), $\AF$
is connected and compact, and:
\[
  \AF = \{(c, \mar(c)) \, \colon \, c \in [\cmar, \cmax]\}.
\]

\item $\cmar=0$.
    \end{enumerate}
  \item\label{item:k>0} If furthermore $\kk>0$ a.e., then we also have:
    \begin{enumerate}[a)]
      \item  $\cmir=\cmax$.
    \item The
  strategy $\un$ (resp. $\zero$) is the only Pareto optimal as well as the only
  anti-Pareto optimal strategy with cost $c=0$ (resp. $c=1$).
 \end{enumerate}
\end{enumerate}
\end{proposition}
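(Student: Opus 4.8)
The plan is to prove the six items of part (i) in a convenient order, isolating the continuity of $\Csup$ as the heart of the matter, and then to read off part (ii) from Proposition~\ref{prop:CRe(0)} and the strict monotonicity of $C$. Item (a) is immediate: a non-zero irreducible kernel is not a.e.\ zero, so the dichotomy recalled in Section~\ref{sec:atom} gives $R_0=R_e(\un)>0$. For item (f), I would use the sharp monotonicity available at the top of the spectrum: since $\kk$ is irreducible, $R_0$ is a simple eigenvalue of the positive compact operator $T_\kk$ with an a.e.\ strictly positive eigenfunction (Krein--Rutman/Perron--Frobenius theory, see \cite{ddz-Re, schaefer_banach_1974}). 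Comparing any $\eta\in\Delta$ with $\mu(\eta<1)>0$ against this positive eigenfunction forces $R_e(\eta)<R_0$; hence $R_e(\eta)=R_0$ implies $\eta=\un$ a.e., so $C(\eta)=C(\un)=0$, and by~\eqref{eq:def-cmar} we get $\cmar=\Csup(R_0)=0$.

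Everything else in part (i) follows once we prove item (c), the continuity of $\Csup$ on $[0,R_0]$. Recall from Section~\ref{sec:P-et-AP} that $\Csup$ is decreasing with $\mar\circ\Csup=\Id$, hence injective and thus strictly decreasing, and that $\mar$ is continuous. Left-continuity of $\Csup$ needs no irreducibility: if $\ell_n\uparrow\ell_0$ and $\eta_n$ attains $\Csup(\ell_n)=\max\{C(\eta)\colon R_e(\eta)\ge\ell_n\}$ (attained by weak-* compactness of $\Delta$ and weak-* continuity of $C$ and $R_e$), then a weak-* limit $\eta_\infty$ of a subsequence satisfies $R_e(\eta_\infty)\ge\ell_0$ and $C(\eta_\infty)=\Csup(\ell_0^-)$, so $\eta_\infty$ competes in $\Csup(\ell_0)$ and $\Csup(\ell_0)\ge\Csup(\ell_0^-)\ge\Csup(\ell_0)$. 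Right-continuity is where irreducibility is essential and is the main obstacle. Fix $\ell_0\in(0,R_0)$ (the cases $\ell_0=0$, where $\Csup(0^+)=\cmax$, and $\ell_0=R_0$ being trivial) and let $\eta_0$ attain $\Csup(\ell_0)>\cmar=0$; then $R_e(\eta_0)=\ell_0>0$ and $\mu(\eta_0<1)>0$ (otherwise $\eta_0=\un$ and $C(\eta_0)=0$).

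The key claim is that one can increase $\eta_0$ on a set of positive measure so as to \emph{strictly} increase $R_e$. Granting a family $(\eta_0^{(s)})_{s\ge0}$ with $\eta_0^{(0)}=\eta_0\le\eta_0^{(s)}$, with $s\mapsto R_e(\eta_0^{(s)})$ continuous and strictly increasing near $0$, and $C(\eta_0^{(s)})\to C(\eta_0)$ as $s\downarrow0$, one gets for $\ell\downarrow\ell_0$ parameters $s(\ell)\downarrow0$ with $R_e(\eta_0^{(s(\ell))})=\ell$, whence $\Csup(\ell)\ge C(\eta_0^{(s(\ell))})\to C(\eta_0)=\Csup(\ell_0)$ and right-continuity follows. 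To build such a family I would use the Perron eigenfunction $v_0\ge0$ of $T_{\kk\eta_0}$ at $\ell_0$: if $\{v_0>0\}\cap\{\eta_0<1\}$ has positive measure, increasing $\eta_0$ there raises $R_e$ to first order; otherwise $\eta_0=1$ a.e.\ on $S:=\{v_0>0\}$ and $v_0$ is a Perron eigenfunction of $T_{\ind{S}\kk\ind{S}}$, and since $0<\mu(S)<\mu(\Omega)$, irreducibility of $\kk$ forbids $S$ from being invariant, so increasing $\eta_0$ on a subset of $S^c\cap\{\eta_0<1\}$ coupled to $S$ through $\kk$ enlarges the effective irreducible block and strictly increases $R_e$. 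This dichotomy is the crux. With continuity of $\Csup$ in hand, it becomes a homeomorphism of $[0,R_0]$ onto $[0,\cmax]$ whose inverse is $\mar$ on $[\cmar,\cmax]=[0,\cmax]$; this yields items (b), (c) and the inverse formula (d). For (e), continuity makes $\cpa=\{\eta\colon C(\eta)=\Csup(R_e(\eta))\}$ weak-* closed, hence compact, and $\AF=\{(\Csup(\ell),\ell)\colon\ell\in[0,R_0]\}=\{(c,\mar(c))\colon c\in[\cmar,\cmax]\}$ is the continuous image of an interval, hence connected and compact.

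For part (ii), assume $\kk>0$ a.e., so its support is all of $\Omega^2$ and in particular symmetric. If $A$ is an independent set then $\kk=0$ $\mu^{\otimes2}$-a.e.\ on $A\times A$, which forces $\mu(A)^2=0$, i.e.\ $\mu(A)=0$ and $\ind{A}=\zero$ a.e.; by Proposition~\ref{prop:CRe(0)} this gives $\cmir=\Cinf(0)=C(\zero)=\cmax$, proving (ii)(a). For (ii)(b), since $C$ is strictly decreasing with $C(\un)=0$ and $C(\zero)=\cmax$, the strategy $\un$ is the unique $\eta$ of cost $0$ and $\zero$ the unique $\eta$ of cost $\cmax$. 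Now $\un$ realizes $(0,R_0)$ on both frontiers, since $\Cinf(R_0)=0$ and $\Csup(R_0)=\cmar=0$, so it is the unique Pareto and the unique anti-Pareto optimal strategy of cost $0$; and $\zero$ realizes $(\cmax,0)$ on the anti-Pareto frontier since $\Csup(0)=\cmax$, and on the Pareto frontier because $\Cinf(0)=\cmir=\cmax$ by (ii)(a), so it is the unique Pareto and unique anti-Pareto optimal strategy of maximal cost $\cmax$.
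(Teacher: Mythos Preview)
Your approach differs substantially from the paper's. The paper's proof of part~(i) is essentially a citation: it invokes \cite[Theorem~V.6.6]{schaefer_banach_1974} for~(a), then observes that irreducibility makes $\Omega$ itself the unique non-zero atom, so \cite[Lemma~5.14]{ddz-theo} gives Assumption~7 of that paper (every local maximum of $R_e$ is global), and \cite[Proposition~5.9]{ddz-theo} then delivers~(b)--(e) and $\cmar=C(\ind{\Omega})=C(\un)=0$. For part~(ii) the paper argues directly that for $\eta\neq\zero$ the kernel $\kk\eta$ restricted to $\{\eta>0\}$ is a.e.\ positive, hence irreducible with positive spectral radius, so $R_e(\eta)>0$; this immediately gives $\cmir=\cmax$. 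Your route through Proposition~\ref{prop:CRe(0)} and the triviality of independent sets for a strictly positive kernel is a perfectly valid alternative for~(ii)(a), and your argument for~(i)(f) via the left and right Perron eigenfunctions can be made rigorous (pair a hypothetical eigenfunction $w$ of $T_{\kk\eta}$ at $R_0$ against the strictly positive left eigenfunction $u$ of $T_\kk$ to force $w=0$ on $\{\eta<1\}$, then use simplicity of $R_0$).

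Where your proposal is thinner is precisely the step that replaces \cite[Lemma~5.14]{ddz-theo}: the right-continuity of $\Csup$. Your Case~1 claim that increasing $\eta_0$ on $\{v_0>0\}\cap\{\eta_0<1\}$ ``raises $R_e$ to first order'' implicitly uses a perturbation formula for the top eigenvalue, but that formula involves the \emph{left} eigenfunction $u_0$ of $T_{\kk\eta_0}$ as well, and since $\kk\eta_0$ need not be irreducible, neither $u_0$ nor $v_0$ need be a.e.\ positive, nor $\ell_0$ simple; you must argue that the supports of $u_0$, $v_0$ and the perturbation interact correctly. In Case~2, ``enlarging the effective irreducible block'' is the right intuition, but turning it into a strict increase of the spectral radius requires more than noting that $S$ is not $\kk$-invariant: you need to exhibit a specific enlargement for which the Collatz--Wielandt bound (or an analogous comparison) becomes strict. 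None of this is wrong, and the dichotomy is the right skeleton---indeed it is essentially what \cite[Lemma~5.14]{ddz-theo} does in the monatomic setting---but as written the two cases are sketches rather than proofs.
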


\begin{proof}
  According to \cite[Theorem~V.6.6]{schaefer_banach_1974}, if $\kk$ is
  an irreducible kernel with finite double norm, then, as $\kk$ is
  non-zero,  we have
  $R_0=R_0[\kk]>0$. This gives~\ref{item:k-mono}~\ref{item:k-mono-mar}.

  The other items follow from various results from~\cite{ddz-theo}: Assumptions~3 and 6
  from that paper hold, as well as Assumption~7, thanks to~\cite[Lemma~5.14]{ddz-theo}. In
  the notation of~\cite{ddz-theo}, as $\oa=\Omega$, we get $\cmar=C(\un)=0$. We conclude
  using \cite[Proposition~5.9]{ddz-theo} that
  items~\ref{item:k-mono}~\ref{item:k-mono-mar2}-~\ref{item:k-mono-AF} hold.

We now assume that $\kk>0$ a.e.  As  $\cmar=0$, we deduce that the strategy $\un$ is
anti-Pareto optimal. As $C$ is decreasing, we also get that the strategy $\un$ is Pareto
optimal.

Let $\eta\in \Delta$ be different from $\zero$. The kernel $\kk \eta$ restricted to the
set of positive $\mu$-measure $\{\eta>0\}$ is positive, thus the kernel $\kk \eta$
restricted to $\{\eta>0\}$ is positive. It is therefore irreducible and its spectral
radius is positive, so $R_e(\eta)>0$. This also readily implies that $\cmir=\cmax$ and
that the strategy $\zero$ is Pareto optimal. As $C$ is decreasing, we also get that the
strategy $\zero $ is anti-Pareto optimal.
\end{proof}

%  If furthermore $\kk$ is monatomic with atom $\oa$, then thanks to
% \cite[Lemma~5.13]{ddz-theo}, we have $ \cmar=C(\ind{\oa} )$ (which is 0
% if $\kk$ is irreducible); the function $\mar$ is continuous, decreasing
% on $[ \cmar, \cmax]$; the function $\Csup$ is continuous and decreasing
% on $[0, R_0]$; the functions $\mar$ and $\Csup$ are the inverse of each
% other, that is, $\mar \circ \Csup(\ell)=\ell$ for $\ell\in [0, R_0]$ and
% $\Csup \circ \mar( c)=c$ for $c\in [ \cmar, \cmax]$; and the set $\cpa$
% is compact and
% $ \AF = \{(c, \mar(c)) \, \colon \, c \in [\cmar, \cmax]\}$. \medskip

\begin{figure}[t]
  \begin{subfigure}[T]{.5\textwidth}
    \centering
    \includegraphics[page=1]{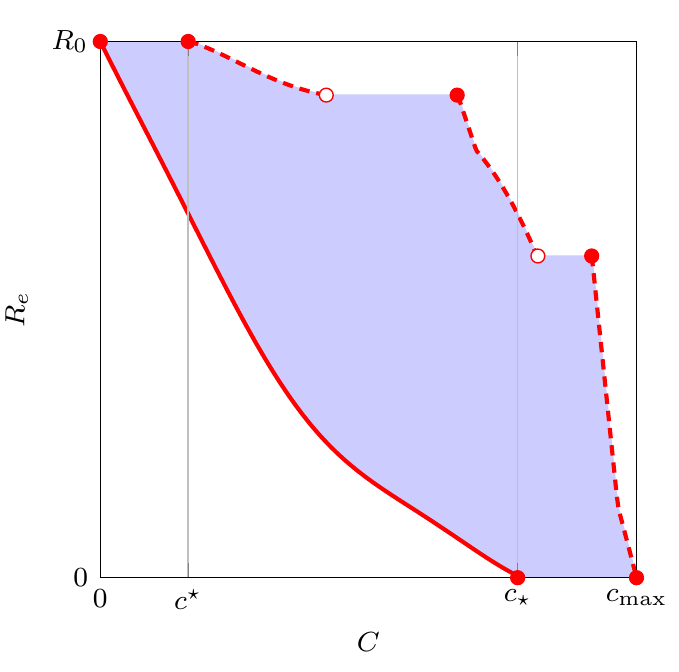}
    \caption{General kernel.}
    \label{fig:generic_frontiers-gen}
  \end{subfigure}%
  \begin{subfigure}[T]{.5\textwidth}
    \centering
    \includegraphics[page=2]{frontier}
    \caption{Monatomic kernel.}
  \end{subfigure}

  \begin{subfigure}[T]{.5\textwidth}
    \centering
    \includegraphics[page=3]{frontier}
    \caption{Irreducible kernel.}
    \label{fig:generic_frontiers-irr}
  \end{subfigure}%
  \begin{subfigure}[T]{.5\textwidth}
    \centering
    \includegraphics[page=4]{frontier}
    \caption{Kernel strictly positive almost surely.}
  \end{subfigure}
  \caption{Generic aspect of the feasible region (light blue), the
    Pareto frontier (thick red line) and the anti Pareto frontier
    (dashed red line) for the loss  function~$R_e[\kk]$, with kernel
    $\kk$, and a continuous decreasing cost function $C$.
    }
    \label{fig:generic_frontiers}
\end{figure}

\medskip

We now state the
properties of  the anti-Pareto frontiers for  monatomic kernel.

\begin{corollary}[Consequences of monatomicity]
  \label{cor:monat}
  Suppose that the cost function $C$ is continuous decreasing with $C(\un)=0$ and
  consider the loss function $R_e=R_e[\kk]$, with $\kk$ a finite double
  norm monatomic  kernel with non-zero atom $\oa$. Then,
  Properties~\ref{item:k-mono}~\ref{item:k-mono-mar}-\ref{item:k-mono-AF} of
  Proposition~\ref{prop:k>0-c} hold. The strategy
 $\ind{\oa}$ is anti-Pareto optimal with cost $\cmar=C(\ind{\oa})$.
\end{corollary}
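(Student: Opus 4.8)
The plan is to reduce the monatomic case to the irreducible one by restricting attention to the unique non-zero atom $\oa$, and then to transfer the conclusions of Proposition~\ref{prop:k>0-c}, which rest on~\cite[Proposition~5.9]{ddz-theo}. The starting observation is that the effective reproduction number only sees $\kk$ through its restriction $\ka=\ind{\oa}\kk\ind{\oa}$: since there is a single non-zero atom, the decomposition~\eqref{eq:decomp} collapses to $R_e[\kk]=R_e[\ka]$. Moreover $T_{\ka\eta}$ integrates its argument only over $\oa$ against $\eta$, so $R_e[\kk](\eta)=R_e[\ka](\eta)$ depends on $\eta$ solely through its restriction to $\oa$, where $\ka$ is irreducible and non-zero.

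From this I would first obtain item~\ref{item:k-mono-mar}: the kernel $\ka$ viewed on $\oa$ is irreducible and non-zero, so the argument of Proposition~\ref{prop:k>0-c} via~\cite[Theorem~V.6.6]{schaefer_banach_1974} gives $R_0=R_e[\kk](\un)=R_0[\ka]>0$. For items~\ref{item:k-mono-mar2}--\ref{item:k-mono-AF}, I would verify the hypotheses of~\cite[Proposition~5.9]{ddz-theo}: Assumption~3 holds because $R_e$ enjoys the properties of Proposition~\ref{prop:R_e}, Assumption~6 holds in general, and—this is the precise place where monatomicity rather than irreducibility is needed—Assumption~7 holds for a monatomic kernel by Section~5.4.2 in~\cite{ddz-theo}. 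Applying~\cite[Proposition~5.9]{ddz-theo} then yields the continuity and monotonicity of $\mar$ and $\Csup$, the inverse relation $\Csup\circ\mar=\Id$ on $[\cmar,\cmax]$, and the compactness of $\cpa$ together with the description of $\AF$.

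It remains to identify $\cmar$ and the associated anti-Pareto optimal strategy, which I expect to be the delicate step. Because $R_e[\kk](\eta)=R_e[\ka](\eta)$ is governed by the \emph{irreducible} restriction of $\kk$ to $\oa$, the reasoning behind $\cmar=0$ in the irreducible case applies verbatim on $\oa$ and shows that $R_e(\eta)=R_0$ holds if and only if $\eta=\un$ $\mu$-a.e.\ on $\oa$; off $\oa$ the value of $\eta$ is irrelevant for the loss. Since $C$ is decreasing, the maximal cost among all such strategies is attained by taking $\eta$ as small as possible on $\oa^c$, namely $\eta=\ind{\oa}$. Hence $\cmar=\Csup(R_0)=C(\ind{\oa})$, and because $C(\ind{\oa})=\cmar=\Csup(R_e(\ind{\oa}))$, the strategy $\ind{\oa}$ satisfies the defining relation of $\cpa$ and is therefore anti-Pareto optimal. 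The main obstacle is exactly this last characterization: one must import the strict monotonicity of $R_e$ for the irreducible kernel on $\oa$ to guarantee that reaching the loss $R_0$ forces $\eta\equiv 1$ on the atom, so that the cost-maximal loss-$R_0$ strategy is $\ind{\oa}$ rather than $\un$, which is what distinguishes the monatomic conclusion $\cmar=C(\ind{\oa})$ from the irreducible one $\cmar=0$.
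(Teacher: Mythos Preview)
Your proposal is correct and follows essentially the same route as the paper: verify that Assumption~7 of~\cite{ddz-theo} holds in the monatomic case (the paper cites~\cite[Lemma~5.14]{ddz-theo}, your reference to Section~5.4.2 there is the same content), then apply~\cite[Proposition~5.9]{ddz-theo} to get items~\ref{item:k-mono-mar2}--\ref{item:k-mono-AF}. The only difference is cosmetic: the paper obtains both $R_0>0$ and $\cmar=C(\ind{\oa})$ by a direct citation of~\cite[Lemma~5.14]{ddz-theo}, whereas you unpack the latter by arguing that $R_e(\eta)=R_0$ forces $\eta=\un$ a.e.\ on $\oa$ via the strict monotonicity of $R_e$ on the irreducible atom---this is exactly what that lemma encapsulates, so the two arguments coincide.
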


\begin{proof}
   According to~\cite[Lemma~5.14]{ddz-theo}, we get that $R_0$ is
   positive and  $\cmar=C(\ind{\oa})$. The other results are proved as
   in Proposition~\ref{prop:k>0-c}.
\end{proof}
Using the
properties of  the anti-Pareto frontiers stated  in Proposition~\ref{prop:k>0-c}
for positive  kernels and  in Corollary~\ref{cor:monat}  for  monatomic
kernel, we plotted in Figure~\ref{fig:generic_frontiers}  the typical Pareto and
anti-Pareto  frontiers  for a  general  kernel  (notice the  anti-Pareto
frontier is not connected \textit{a priori}), a monatomic kernel (notice
the  anti-Pareto frontier  is  connected), and  a  positive kernel.

\subsection{Creating a \emph{cordon sanitaire} is not the worst idea}\label{sec:cordon}

We say a strategy $\eta\in \Delta$ is a \emph{cordon sanitaire} or \emph{disconnecting}
(for the kernel $\kk$) if $\eta\neq \zero$ and the kernel $\kk$ restricted to the set
$\{\eta>0\}$ is not connected (that is, not irreducible).
Let us first give a few elementary
comments on disconnecting strategies.
  \begin{itemize}
  \item The strategy $\eta=\un$ is disconnecting if and only if $\kk$
    is not connected.
  \item Disconnection  only depends  on fully vaccinated  individuals: A
    strategy  $\eta$  is  disconnecting  if and  only  if  the  strategy
    $\ind{\{\eta>0\}}$ is disconnecting.
  \item If $\kk>0$, then there is no disconnecting strategy.
    \item If $\eta\neq \zero$ is a strategy such that $\kk=0$ a.e.\ on
        $\{\eta>0\}^2$, then $\eta$ is disconnecting.
  \end{itemize}

The next proposition states that if the strategy $\eta$ is anti-Pareto optimal for a
kernel $\kk$ and non zero, then the kernel $\kk$ restricted to $\{\eta>0\}$ is irreducible. Let us
remark that in general this implication is not an equivalence.

\begin{proposition}[A \emph{cordon sanitaire} is never the worst idea]\label{prop:cut}
  %%% PROPOSITION
  Suppose that the cost function~$C$ is continuous decreasing and consider the loss
  function $R_e[\kk]$, with $\kk$ a finite double norm kernel on $\Omega$ such that
  $R_0[\kk]>0$. Then, a disconnecting strategy is not anti-Pareto optimal.
\end{proposition}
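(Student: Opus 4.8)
The plan is to show that a disconnecting strategy $\eta$ is dominated, for the maximization problem~\eqref{eq:bi-max}, by a strategy that vaccinates strictly more while leaving the effective reproduction number unchanged. Concretely, I would construct $\eta'\in\Delta$ with $\eta'\le\eta$, $\mu(\{\eta'<\eta\})>0$ and $R_e[\kk](\eta')=R_e[\kk](\eta)$. Since $C$ is decreasing, the first two properties force $C(\eta')>C(\eta)$, so $\eta'$ is admissible in the definition of $\Csup$ at the level $R_e[\kk](\eta)$, whence $C(\eta)<C(\eta')\le\Csup(R_e[\kk](\eta))$. Using the representation $\cpa=\{\eta\in\Delta\,\colon\,C(\eta)=\Csup(R_e(\eta))\}$, this gives $\eta\notin\cpa$, i.e.\ $\eta$ is not anti-Pareto optimal.

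To build $\eta'$, write $S=\{\eta>0\}$ and $\tilde\kk=\ind{S}\kk\ind{S}$ (a finite double norm kernel, being dominated by $\kk$). The first ingredient is the restriction identity $R_e[\kk](\zeta)=R_e[\tilde\kk](\zeta)$ for every $\zeta\in\Delta$ supported on $S$: applying $\rho(AB)=\rho(BA)$ from~\eqref{eq:r(AB)} with the compact $A=T_\kk M_{\sqrt\zeta}$ and the bounded $B=M_{\sqrt\zeta}$ yields $R_e[\kk](\zeta)=\rho(M_{\sqrt\zeta}T_\kk M_{\sqrt\zeta})$, and since $\sqrt\zeta=\sqrt\zeta\,\ind{S}$ the kernel of this operator equals $\sqrt\zeta\,\tilde\kk\,\sqrt\zeta$, giving the claim. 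If $R_e[\kk](\eta)=0$, I simply take $\eta'=\zero$, which satisfies $R_e[\kk](\zero)=0=R_e[\kk](\eta)$ and $C(\zero)=\cmax>C(\eta)$ since $\eta\neq\zero$. Otherwise $R_e[\kk](\eta)>0$, and I invoke the atomic decomposition~\eqref{eq:decomp} for $\tilde\kk$: its non-zero atoms $(\Omega_i)_{i\in I}$ lie in $S$ and satisfy $R_e[\tilde\kk](\eta)=\max_{i\in I}R_e[\tilde\kk_i](\eta)$, with $\tilde\kk_i=\ind{\Omega_i}\kk\ind{\Omega_i}$. As this value is positive the maximum is attained at some $i_0\in I$, and I set $\eta'=\eta\,\ind{\Omega_{i_0}}$.

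It remains to verify the two properties of $\eta'$. For the loss, $\eta'$ is supported on $\Omega_{i_0}$, so the restriction identity (now with $\Omega_{i_0}$ in place of $S$) together with the fact that $\eta'$ and $\eta$ agree on $\Omega_{i_0}$ gives $R_e[\kk](\eta')=R_e[\tilde\kk_{i_0}](\eta)=R_e[\kk](\eta)$ by the choice of $i_0$; this is consistent with the monotonicity of $R_e$ in Proposition~\ref{prop:R_e}, though that inequality is not needed. The crux—and the step I expect to be the main obstacle, since it is precisely where the disconnecting hypothesis is used—is showing that the cost strictly increased, i.e.\ that $\mu(S\setminus\Omega_{i_0})>0$. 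Indeed, if $\mu(S\setminus\Omega_{i_0})=0$ then $S=\Omega_{i_0}$ up to a null set, so $\Omega_{i_0}$ is the unique non-zero atom and equals $S$; but the restriction of $\kk$ to an atom is irreducible, so $\kk$ restricted to $S=\{\eta>0\}$ would be irreducible, contradicting the assumption that $\eta$ is disconnecting. Hence $\mu(S\setminus\Omega_{i_0})>0$, and on that set $\eta'=0<\eta$, so the strict monotonicity of $C$ gives $C(\eta')>C(\eta)$, which completes the argument.
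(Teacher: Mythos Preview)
Your argument is correct and follows the same underlying idea as the paper: find a subset of $\{\eta>0\}$ on which the effective reproduction number is already attained, and fully vaccinate its complement to produce a strategy with strictly higher cost and identical loss. The paper, however, works directly from the definition of reducibility: it takes a single nontrivial invariant splitting $\{\eta>0\}=A\cup B$ with $(\kk\eta)(B,A)=0$, invokes the two-block identity $R_e[\kk\eta](\ind{A}+\ind{B})=\max(R_e[\kk\eta](\ind{A}),R_e[\kk\eta](\ind{B}))$, and then exhibits the whole family $\eta_\theta=\eta\ind{A}+\theta\eta\ind{B}$, $\theta\in[0,1)$, of strictly worse strategies. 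This avoids the full atomic decomposition~\eqref{eq:decomp} you rely on, and in particular requires no separate treatment of the case $R_e[\kk](\eta)=0$ (the two-block argument goes through uniformly). Your route is a bit heavier in machinery but perfectly valid; the paper's is slightly more self-contained.
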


In the non-oriented cycle graph from Example~\ref{ex:cycle-graph}, this property is
illustrated in Figure~\ref{fig:perf} as the disconnecting strategy ``one in $4$'' is not
anti-Pareto optimal; see Figure~\ref{fig:cycle-kern-graph-disc}

\begin{proof}

  Let $\eta$ be a disconnecting strategy, and thus $\eta\neq \zero$. Since $\eta$ is
  disconnecting, that is, $\kk$ restricted to $\{\eta>0\}$ is not irreducible, we deduce
  there exists $A, B\in \cf$ such that $\mu(A)>0$, $\mu(B)> 0$, $(\kk \eta) (B, A)=0$ and
  a.e.\ $A\cup B=\{\eta>0\}$ and $A\cap B=\emptyset$. We deduce
  from~\cite[Equation~(29)]{ddz-Re} where we can replace  $\kk$  by $\kk \eta$ that:
  \begin{equation}
    \label{eq:R0=maxR0}
    R_e[\kk\eta](\ind{A}+ \ind{B})=\max \left( R_e[\kk\eta](\ind{A}),R_e[\kk\eta](\ind{B}) \right).
  \end{equation}
  First assume that $R_e[\kk\eta](\ind{A})\geq R_e[\kk\eta](\ind{B})$, so that:
  \[
    R_e[\kk](\eta)=   R_e[\kk\eta](\ind{\{\eta>0\}})=
    R_e[\kk\eta](\ind{A}+\ind{B})= R_e[\kk\eta](\ind{A}).
  \]
  For $\theta\in [0, 1]$, define the strategy $\eta_\theta=\eta \ind{A}+ \theta \eta
  \ind{B}$. We deduce that:
  \begin{align*}
    R_e[\kk](\eta_\theta) = R_e[\kk\eta_\theta](\ind{A} + \ind{B}) &= \max
    (R_e[\kk\eta_\theta](\ind{A}), R_e[\kk\eta_\theta](\ind{B})) \\
								   &=\max (R_e[\kk\eta](\ind{A}), \theta R_e[\kk\eta](\ind{B}))\\
								   & = R_e[\kk\eta](\ind{A})\\
								   &= R_e[\kk](\eta),
  \end{align*}
  where we used \eqref{eq:R0=maxR0} with $\eta$ replaced by $\eta_\theta$ for the second
  equality as $(\kk \eta_\theta) (B, A)=0$, and the homogeneity of the spectral radius in
  the third. Thus, the map $\theta\mapsto R_e[\kk](\eta_\theta)$ is constant on $[0, 1]$.
  Since $\mu(B)>0$ and $C$ is decreasing, we get that $\theta \mapsto C(\eta_\theta)$ is
  decreasing. This implies that $\eta_\theta$ is worse than $\eta$ for any $\theta\in [0,
  1)$, and thus $\eta$ is not anti-Pareto optimal.

  The case $R_e[\kk\eta](\ind{B})\geq R_e[\kk\eta](\ind{A})$ is handled similarly.
\end{proof}

\begin{remark}\label{rem:plateau}
  %%% REMARK
  If the kernel $\kk$ is irreducible and non-zero, then the upper
  boundary of the set of outcomes $\FF$
  is the anti-Pareto frontier; see Figure~\ref{fig:generic_frontiers-irr} for instance.
  We deduce from Proposition~\ref{prop:cut} that if $\eta_0$ is a disconnecting strategy,
  then  we have that $R_e[\kk](\eta_0)$ is strictly less that $
  \sup\{R_e[\kk](\eta)\, \colon\, C(\eta)=C(\eta_0)\}$.

  However, if the kernel $\kk$ is not irreducible, then the trivial strategy $\un $ is
  disconnecting. Furthermore, the upper boundary of the set of outcomes $\FF$ is not
  reduced to the anti-Pareto frontier; see Figure~\ref{fig:generic_frontiers-gen} for
  instance. In fact, there exists disconnecting strategies that are not anti-Pareto
  optimal, but whose outcomes lie on the flat parts of the upper boundary of $\FF$. In
  particular, such strategies have the worst loss given their cost. However, it is not
  difficult to check that they do not disconnect further than the trivial strategy $\un$.
\end{remark}

\subsection{Pareto and anti-Pareto frontiers for reducible kernels}\label{sec:reducible}

Let us now  assume that the kernel $\kk$ is  ``truly reducible'', in the
sense that  it has  at least two  non-zero atoms, and thus $R_0=R_0[\kk]>0$.  We  will see  in this
section how to  effectively reduce the study of  the global optimization
problem to  a study of the  optimization problem on each  non-zero
atom. Recall the collection of non-zero atoms $(\Omega_i, i\in I)$
defined in Section~\ref{sec:atom} and the corresponding
quasi-irreducible kernels $(\kk_i, i\in I)$ in~\eqref{eq:decomp}.
By construction, the kernel  $\kk_i$ has a finite double norm and
$R_0[\kk_i]>0$.

We now describe two ways of restricting the problem to an atom.
For the kernel  $\kk_i$ and the loss function $R_e[\kk_i]$,  the atom is
still viewed as  a part of the larger population~$\Omega$. As such, the
vaccination  strategies   that  agree   on  $\Omega_i$  but   differ  on
$\Omega_i^c$ will have  the same loss, but  their costs may
differ.
For $i\in I$ and  $\eta\in \Delta$, we
set similarly:
\[
  \eta_i=\eta \ind{\Omega_i}.
\]
% We now describe two ways of restricting the problem to an atom.
% For $i\in I$, we set:
% \begin{equation}\label{eq:ki}
%   \kk_i=\ind{\Omega_i} \kk \ind{\Omega_i}.
% \end{equation}
% This is a quasi-irreducible kernel on $\Omega$ with finite double
% norm and $R_0[\kk_i]>0$. The first equality in
% Equation~\eqref{eq:R=maxRi2} below gives  the decomposition of $R_e[\kk]$
% using $R_e[\kk_i]$.
% We set $R_0=R_0[\kk]$ and assume it is
% positive.
We consider the loss $R_e[\kk_i]$ and the
corresponding optimal loss function $R^\star_i$ defined on
$[0, \cmax]$ and optimal cost function $C^\star_i$ and $C_{i, \star}$. For convenience
the functions $C^\star_i$ and $C_{i, \star} $ which are defined on $[0, R_0[\kk_i]]$ are
extended to $[0, R_0]$ by letting them be equal to 0 on
$(R_0[\kk_i], R_0]$.  %Notice also that $\{\kk_i\equiv 0\}=\Omega_i^c$.

Another  point  of  view  is  to restrict  the  kernel  and  vaccination strategies   to
the atom, and study it intrinsically, in isolation. Quantities  and functions
defined by  this intrinsic approach will     be    denoted     by    bold     letters.
In particular $\bm{\kk}_i:\Omega_i^2 \to \mathbb{R}$ is the kernel $\kk$ (and $\kk_i$)
restricted to $\Omega_i$; it is irreducible and non-zero by construction and
$R_0[\bm{\kk}_i]$ is a simple positive eigenvalue of the corresponding  integral operator.
If $\eta$ is a  vaccination  strategy,  then  $\bm{\eta}_i$  is  its  restriction  to
$\Omega_i$.  By construction, we have for all $\eta\in \Delta$:
\[
  R_e[\kk_i](\eta)= R_e[\kk_i](\eta_i)=R_e[ \bm{\kk}_i](\bm{\eta}_i).
\]

If $\bm{\eta}$ is a $[0,1]$-valued measurable function defined on $\Omega_i$, we define
its extension $\eta$ on $\Omega$ (corresponding to no  vaccinations outside $\Omega_i$)
and its cost by:
\[
  \eta=\begin{cases}
    \bm{\eta} & \text{on $\Omega_i$}\\
    \un & \text{on $\Omega_i^c$}\\
  \end{cases}
  % \eta(x) = \un_{\Omega_i^c}(x) + \bm{\eta}(x)\un_{\Omega_i}(x)
  \quad  \quad\text{and}\quad
  \bm{C}_i (\bm{\eta}) = C(\eta).
\]
The optimization  problems \eqref{eq:bi-min}  and~\eqref{eq:bi-max} may now be stated on
each $\Omega_i$  for the kernel $\bm{\kk}_i$, the loss $R_e[\bm{\kk}_i]$ and the cost
$\bm{C}_i$: denote by $\bm{C}_{i,\star}$ and  $\bm{C}_i^\star$ the  corresponding  optimal
cost functions,  and extend  them  to  $[0,  R_0]$  by   letting  them  be  equal  to  0
on $(R_0[\mathbf{k}_i], R_0]$. In particular, by construction, $\bm{C}_{i,\star}$ is equal
to $C_{i,\star}$.  However, there  is   no  relation  in  general between $\bm{C}_i^\star$
and $C_i^\star$.  Nevertheless, it is possible to establish  such a relation when  the
cost is extensive.  Recall once more  that  for  a  vaccination  strategy  $\eta$,  the
proportion  of vaccinated individuals of trait $x$  is given by $1-\eta(x)$. Thus, two
vaccination strategies  $\eta$ and  $\eta'$ target disjoint  subsets of the    population
if $\eta\vee\eta'=\un$.

\begin{definition}[Extensivity] Let $C$ be  a continuous decreasing cost
  function with $C(\un)=0$.  The cost $C$ is  called \emph{extensive} if
  vaccinating   disjoint  subsets   of  the   population  is   additive:
  \[  C(\eta\wedge\eta')=C(\eta)+C(\eta') \quad \text{for all $\eta,  \eta'\in
      \Delta$ such that $ \eta \vee \eta'= \un$}.
  \]
\end{definition}

If the continuous decreasing cost function $C$  is extensive, then  we get for all
$\eta\in \Delta$ that:
\begin{equation}
  \label{eq:ext-cost}
  C(\eta)=\sum_{i\in I} C(\eta_i+ \ind{\Omega_i^c})=\sum_{i\in I} \bm{C}_i(\bm{\eta}_i),
\end{equation}
since all the vaccinations  $\eta_i+ \ind{\Omega_i^c}$ target pairwise disjoint subsets of
the population.

\begin{remark}[Affine costs are extensive]
  %%% REMARK
  If       the       cost       function      takes       the       form
  \[
    C(\eta)  =  \cmax  -  \int_\Omega \phi(\eta(x),x) \,\mu(dx)
  \]
  where $\phi \, \colon \, [0,1] \times \Omega \to \R_+$ is measurable and non-decreasing
  in its first variable, then $C$  is extensive. In particuar, the  affine cost  functions
  considered  in Proposition~\ref{prop:critical-ray} are extensive.
\end{remark}

We are  now ready  to state  the reduction  result, which  in particular
implies that if  the cost function is extensive,  then the (anti-)Pareto
frontier  of the  full  model  may be  constructed  from  the family  of
(anti-)Pareto frontiers of each atom.

 \begin{proposition}[Reduction to atoms]\label{prop:decoupling}
   Let $\kk$ be a kernel with finite double norm on $\Omega$, such that
   $R_0 = R_0[\kk]>0$.
   Suppose that the cost function $C$ is continuous decreasing with $C(\un)=0$.
   \begin{enumerate}[(i)]
     \item \textbf{Decomposition of the loss.} \label{it:loss}
   For any $\eta\in\Delta$, we have:
\begin{equation}\label{eq:R=maxRi2}
  R_e[\kk](\eta)
  =\max_{i\in I} R_e[\kk](\eta \ind{\Omega_i})
  =\max_{i\in I} R_e[\kk_i](\eta_i)
  =\max_{i\in I} R_e[\bm{\kk}_i](\bm{\eta}_i).
\end{equation}

 \item \textbf{Anti-Pareto optimal strategies.}
   \label{it:anti-p}
  For all $\ell\in [0,
   R_0]$ and $\eta \in \Delta$, the  following two properties are equivalent:
  \begin{enumerate}[a)]
  \item\label{it:AP1}
    The strategy $ \eta$ is anti-Pareto optimal with $
        R_e[\kk](\eta) = \ell$.
      \item\label{it:AP2}  There exists $j \in
        \mathrm{argmax}_{i\in I} \, C_i^\star(\ell)$ such that
$\eta=0$ on $\Omega_j^c$ and $\eta=\bm{\eta}_j$ on
        $\Omega_j$, where   $\bm{\eta}_{j}$ is
        anti-Pareto optimal for $\bm{\kk}_j$ on $\Omega_{j}$ and cost
        function $\bm{C}_i$ with
        $R_e[\bm{\kk}_j](\bm{\eta}_j) =\ell$.
   \end{enumerate}
     Besides, we have:
 \begin{equation}
   \label{eq:R*C*}
    R_e^\star= \max_{i\in I}\, R^\star_i \quad\text{on $[0, \cmax]$}\quad\text{and}\quad
    \Csup= \max_{i\in I}\, C^\star_i \quad\text{on $[0, R_0]$}.
      \end{equation}
Furthermore, if the cost function $C$ is extensive, then  for all $i\in I$, we
  have:
  \[
    C_i^\star=
    \bm{C}_i^\star + C(\un_{\Omega_i}).
  \]

\item  \textbf{Pareto  optimal  strategies  when the  cost  function  is
    extensive.}\label{it:pareto} Suppose  that the cost function  $C$ is
  extensive.  For all  $\ell\in [0,  R_0]$  and $\eta  \in \Delta$,  the
  following two properties are equivalent:
  \begin{enumerate}[a)]
  \item\label{it:P1} The strategy $\eta$ is Pareto optimal with $ R_e[\kk](\eta) = \ell$.
  \item\label{it:P2} On $(\bigcup_{i\in
             I} \Omega_i)^c$, $\eta=1$  and, for all $i\in I$, $\eta$
           restricted to 
           $\Omega_i$, say  $\bm{\eta}_i$, is   Pareto optimal
           for $\bm{\kk}_i$ on $\Omega_i$  and cost
        function $\bm{C}_i$ with  $R_e[\bm{\kk}_i](\bm{\eta}_i) =
           \min(\ell, R_0[\bm{\kk}_i])$ (and thus $\bm{\eta}_i=\un$ if
           $R_0[\bm{\kk}_i]\leq  \ell$).
   \end{enumerate}
   Besides, we have:
   \[
     C_\star = \sum_{i\in I} \bm{C}_{i,\star}.
   \]
    \end{enumerate}
\end{proposition}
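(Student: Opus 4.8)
The plan is to reduce every assertion to the loss decomposition of part~(i), and then to exploit that $C$ is strictly decreasing and weak-$*$ continuous while the sets $\{R_e\ge \ell\}$ and $\{R_e\le\ell\}$ are weak-$*$ compact. For part~(i) I would apply the atomic decomposition~\eqref{eq:decomp} directly to an arbitrary $\zeta\in\Delta$, namely $R_e[\kk](\zeta)=\max_{i\in I}R_e[\kk_i](\zeta)$. Two elementary facts finish it. First, $\ind{\Omega_i}\ind{\Omega_i}=\ind{\Omega_i}$ gives $\kk_i\zeta=\kk_i\zeta_i$ as kernels, hence $R_e[\kk_i](\zeta)=R_e[\kk_i](\zeta_i)$; combined with the recorded identity $R_e[\kk_i](\eta_i)=R_e[\bm{\kk}_i](\bm{\eta}_i)$ this yields the last two equalities of~\eqref{eq:R=maxRi2} on taking $\zeta=\eta$. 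Second, the atoms are pairwise disjoint, so $\kk_j(\eta\ind{\Omega_i})=0$ for $j\neq i$; applying the decomposition to $\zeta=\eta\ind{\Omega_i}$ leaves only the $i$-term and gives $R_e[\kk](\eta\ind{\Omega_i})=R_e[\kk_i](\eta_i)$, the first equality.

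\emph{Part (ii).} The two value-function identities~\eqref{eq:R*C*} are an exchange of maxima: over the feasible set $\{C\ge c\}$ we have $\mar(c)=\max_\eta\max_i R_e[\kk_i](\eta_i)=\max_i R^\star_i(c)$, and since $R_e[\kk](\eta)\ge\ell$ means $R_e[\kk_i](\eta_i)\ge\ell$ for at least one $i$, maximising $C$ atom by atom gives $\Csup(\ell)=\max_i C^\star_i(\ell)$; all maxima are attained because $\{R_e\ge\ell\}$ is weak-$*$ closed in the weak-$*$ compact $\Delta$ and $C$ is weak-$*$ continuous. For the equivalence, let $\eta$ be anti-Pareto optimal with $\ell=R_e[\kk](\eta)$, and pick an atom $j$ with $R_e[\kk_j](\eta_j)=\ell$ (it exists by part~(i)); feasibility of $\eta$ for $C^\star_j$ forces $C(\eta)\le C^\star_j(\ell)\le\max_iC^\star_i(\ell)=C(\eta)$, so $C(\eta)=C^\star_j(\ell)$ and $j\in\mathrm{argmax}_iC^\star_i(\ell)$. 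As $C$ is strictly decreasing while $R_e[\kk_j]$ ignores $\Omega_j^c$, replacing $\eta$ by $\eta\ind{\Omega_j}$ keeps feasibility and cannot raise the cost, so optimality forces $\eta=0$ a.e.\ on $\Omega_j^c$ (and, for $\ell>0$, makes the maximising atom unique). Finally, writing $\bar\eta$ for the extension of $\bm{\eta}_j$ by $\un$ off $\Omega_j$, the relations $\bar\eta\wedge\un_{\Omega_j}=\eta$ and $\bar\eta\vee\un_{\Omega_j}=\un$ together with extensivity give $C(\eta)=\bm{C}_j(\bm{\eta}_j)+C(\un_{\Omega_j})$; taking suprema over $\{R_e[\bm{\kk}_j]\ge\ell\}$ yields the stated relation $C^\star_i=\bm{C}^\star_i+C(\un_{\Omega_i})$, and comparing with $C(\eta)=C^\star_j(\ell)$ gives $\bm{C}_j(\bm{\eta}_j)=\bm{C}^\star_j(\ell)$, i.e.\ $\bm{\eta}_j$ is anti-Pareto optimal on $\Omega_j$. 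Reading the same chain backwards proves the converse.

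\emph{Part (iii).} Here extensivity makes the cost additive, $C(\eta)=\sum_{i}\bm{C}_i(\bm{\eta}_i)$ by~\eqref{eq:ext-cost}, while part~(i) turns $R_e[\kk](\eta)\le\ell$ into the conjunction over $i$ of $R_e[\bm{\kk}_i](\bm{\eta}_i)\le\ell$. Objective and constraints thus separate atom by atom, so the minimisation defining $\Cinf$ decouples: $\Cinf(\ell)=\sum_i\min\{\bm{C}_i(\bm{\eta}_i):R_e[\bm{\kk}_i](\bm{\eta}_i)\le\ell\}=\sum_i\bm{C}_{i,\star}(\min(\ell,R_0[\bm{\kk}_i]))$, which is $\sum_i\bm{C}_{i,\star}$ under the extension convention. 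Tracking the equality case of the chain $\sum_i\bm{C}_i(\bm{\eta}_i)\ge\sum_i\bm{C}_{i,\star}(\ell)$ forces each $\bm{\eta}_i$ to be Pareto optimal with $R_e[\bm{\kk}_i](\bm{\eta}_i)=\min(\ell,R_0[\bm{\kk}_i])$ (hence $\bm{\eta}_i=\un$ when $R_0[\bm{\kk}_i]\le\ell$), and on $(\bigcup_i\Omega_i)^c$ Pareto optimality forces $\eta=\un$, since raising $\eta$ there lowers $C$ without moving any $R_e[\bm{\kk}_i]$. The converse is again the same computation reversed.

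\emph{Main obstacle.} The crux is the anti-Pareto equivalence: proving that an anti-Pareto optimal strategy must be supported on a single atom, and reconciling the global cost with the intrinsic atom cost $\bm{C}_j$, which is built from the extension by $\un$ while the optimum sits on the extension by $\zero$. Extensivity is precisely what bridges these two normalisations through the additive relation $C^\star_i=\bm{C}^\star_i+C(\un_{\Omega_i})$; the remaining care is in bookkeeping the weak-$*$ attainment for the (possibly discontinuous) anti-Pareto frontier and in handling the degenerate level $\ell=0$, where the support on a single atom can collapse to $\eta=\zero$.
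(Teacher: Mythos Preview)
Your argument follows essentially the same route as the paper's: part~(i) via the atomic decomposition~\eqref{eq:decomp}, part~(ii) via the chain of inequalities $C(\eta)\le C(\eta_j)\le C^\star_j(\ell)\le \max_i C^\star_i(\ell)\le \Csup(\ell)$ and an analysis of the equality cases, and part~(iii) via the additive splitting of $C$ and the conjunction of constraints. Two points deserve attention.

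First, a slip of direction in~(ii): replacing $\eta$ by $\eta\ind{\Omega_j}$ \emph{raises} the cost (since $\eta\ind{\Omega_j}\le\eta$ and $C$ is decreasing), not the opposite. The argument you want is: $\eta\ind{\Omega_j}$ is globally feasible with $R_e[\kk](\eta\ind{\Omega_j})=\ell$ and $C(\eta\ind{\Omega_j})\ge C(\eta)=\Csup(\ell)$, so equality is forced and strict monotonicity gives $\eta=0$ on $\Omega_j^c$.

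Second, and more substantively, in~(ii) you invoke extensivity to pass from ``$\eta_j$ maximises $C$ under $R_e[\kk_j]\ge\ell$'' (the \emph{extrinsic} optimality, which is what the chain of inequalities actually yields) to ``$\bm{\eta}_j$ is anti-Pareto optimal for $\bm{\kk}_j$ with cost $\bm{C}_j$'' (the \emph{intrinsic} optimality stated in~\ref{it:AP2}). The paper places the equivalence \ref{it:AP1}$\Leftrightarrow$\ref{it:AP2} \emph{before} the extensivity assumption, and its proof simply asserts this passage via the table; but without further structure on $C$, the two orderings on strategies restricted to $\Omega_j$---via extension by $\zero$ versus extension by $\un$---need not coincide on incomparable pairs, so the extrinsic and intrinsic anti-Pareto optima can differ. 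Your use of extensivity here is therefore not an oversight but the honest way to bridge the gap; it is the same device the paper uses a few lines later for the identity $C_i^\star=\bm{C}_i^\star+C(\un_{\Omega_i})$.
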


\begin{remark}[Additional consequences]
  %%% REMARK
  From~\eqref{eq:FL=L*} and the second part of~\eqref{eq:R*C*}, we get that the
  anti-Pareto frontier is given by: \[ \AF=\left\{\left(\max_{i\in I} C_i^\star(\ell),
  \ell)\right)\, \colon\, \ell \in [0, R_0]\right\}. \] We   deduce from
  Point~\ref{it:anti-p} that the maximal cost of totally inefficient strategies is given
  by:
  \[
    \cmar:=\Csup(R_0)=\max_{ i\in I} \{C(\ind{\Omega_i})\, \colon\,
    R_0[\kk_i]=R_0[\kk]\}.
  \]
  According to \cite[Remark~5.1(v)]{ddz-Re}  the number of  atoms $\Omega_i$ such that
  $R_0[\kk_i]=R_0[\kk]$  is   equal  to  the  algebraic  multiplicity of  $R_0$  for
  $T_\kk$.

  As any Pareto optimal strategy is larger than $\ind{(\bigcup_{i \in I} \Omega_i)^c}$
  according to Point~\ref{it:pareto},  we get  an upper  bound for  the minimal  cost
  which ensures that no infection occurs at all:
  \[
    \cmir=\Cinf(0)\leq C(\eta) \quad\text{with}\quad
    \eta=\un - \sum_{i\in I} \ind{\Omega_i}.
  \]
\end{remark}

\begin{remark}\label{rem:L-discont}
  %%% REMARK
  If  $R_0[\kk]>0$ and  $\kk$ is  not  monatomic, then  Assumption 7  in
  \cite{ddz-theo} (that  is any  local maximum of  the loss  function is
  also  a global  maximum) may  or  may not  be satisfied  for the  loss
  function  $\loss=R_e[\kk]$;  this   can  happen even   in  a  two
  homogeneous populations  model.  In  the former  case the  function $C  ^\star$ is
  continuous and the  anti-Pareto frontier is connected,  whereas in the
  latter  case the  function  $C ^\star$  may have  jumps  and then  the
  anti-Pareto frontier has more than one connected component.
\end{remark}

\begin{proof}[Proof of Proposition~\ref{prop:decoupling}]
Let   $\kk$ be  a finite double norm kernel
on $\Omega$ such that $R_0=R_0[\kk]>0$.  Set
$\Omega_0=\Omega   \setminus  \cup_{i\in   I}   \Omega_i$.
For $i\in I$ and $\eta\in \Delta$, we set $\eta_i=\eta
\ind{\Omega_i}$.

According to~\eqref{eq:decomp} and since $R_e[\kk_i](\eta)=
R_e[\kk_i](\eta_i)=R_e[\kk](\eta_i)$,  we can
decompose $R_e[\kk]$ according to the quasi-irreducible components $(\kk_i, i\in I)$ of
$\kk$ to get that   for $\eta\in \Delta$:
\begin{equation}\label{eq:R=maxRi}
  R_e[\kk](\eta)
   =\max_{i\in I} R_e[\kk_i](\eta)
 =\max_{i\in I} R_e[\kk_i](\eta_i)
 =\max_{i\in I} R_e[\kk](\eta_i).
\end{equation}
Then use that $\bm{\kk}_i$ is the restriction of $\kk_i$ to $\Omega_i$ to
get  Point~\ref{it:loss}.

We  now prove  Point~\ref{it:anti-p}.  Equation~\eqref{eq:R=maxRi}  and the     definition
of $R_e^\star$    readily     implies    that $ R_e^\star=  \max_{i\in I}\,
R^\star_i$, which  gives the  first part of~\eqref{eq:R*C*}.

We prove  that properties~\ref{it:AP1} and~\ref{it:AP2}  are equivalent.
The case  $\ell=0$ being trivial,  we only consider $\ell\in  (0, R_0]$.
Let $\eta$ be a strategy  such that $R_e[\kk](\eta)=\ell$.  According to
\ref{it:loss},       there       exists        $j$       such       that
$R_e[\kk](\eta)=R_e[\kk](\eta_j)$. Since $\ell>0$,  we get that $\eta_j$
is not equal to $\zero$. Hence, we get:
 \[
   C(\eta) \leq \inf_{i \in I} C(\eta_i) \le C(\eta_j) \leq C^\star_j(\ell) \leq \sup_{i \in
   I} C^\star_i(\ell) \leq C^\star(\ell),
 \]
 where:
 \begin{enumerate}
   \item the first and second inequalities become equalities if and only if $\eta_i =
     \zero$ for all $i \neq j$ because $C$ is decreasing;
   \item the third inequality is an equality if and only if
     $\bm{\eta}_j$ is anti-Pareto 
     optimal (see Table~\ref{tab:not-PAP});
   \item the last inequality follows from the fact that $R_e[\kk_i](\eta_i) =
     R_e[\kk](\eta_i)$ for all $i \in I$.
 \end{enumerate}
 Hence, Property~\ref{it:AP1} is equivalent to the following equalities:
 \begin{equation}
   \label{eq:i=star} C^\star_j(\ell)=C(\eta_j)=C(\eta)=C^\star(\ell).
 \end{equation}
 which is equivalent to Property~\ref{it:AP2}. In particular, it follows from the
 existence of the anti-Pareto optimal strategy that $\sup_{i\in I} C^\star_i$ is in fact a
 $\max$.

We now prove  that $ C_i^\star= \bm{C}_i^\star  + C(\un_{\Omega_i})$ for
all $i\in I$ in case $C$ is extensive.   Note that the optimal cost  $\bm{C}_i^\star$,
defined in terms of the restricted kernel $\bm{\kk}_i$,  and which may be viewed as
intrinsic on $\Omega_i$,  differs from the cost  $C_i^\star$, defined on the  
``extrinsic''  kernel   $\kk_i$   defined  on   the  whole   space $\Omega$. Let $\ell\in 
[0, R_0]$.  The worst vaccinations  on the whole space   clearly consist in  vaccinating
everyone outside $\Omega_i$ and vaccinating in the  worst possible way inside $\Omega_i$,
that is,  if $\eta$ is anti-Pareto  optimal for the kernel  $\kk_i$ with loss    $\ell\in 
[0,    R_0]$     and    cost     $C(\eta)$,    then $\eta=\bm{\eta}_i  \ind{\Omega_i}$,
where  $\bm{\eta}_i$ is  anti-Pareto optimal  for  the   kernel  $\bm{\kk_i}$  with  loss 
$\ell$  and  cost $\bm{C}_i^\star(\ell)=\bm{C}_i(\bm{\eta}_i)$.                        Set
$\eta'=\eta+\ind{\Omega_i^c}$   and   $\eta''=\ind{\Omega_i}$  so   that $\eta=\eta'
\vee \eta''=\un$. By definition of $\bm{C}_i$, we have
$C(\eta')=\bm{C}_i(\bm{\eta}_i) $.  Since $C$ is extensive, we get:
\[
  C^\star_i(\ell)=C(\eta)=C(\eta')+ C(\eta'')=\bm{C}_i(\bm{\eta}_i)  +
  C(\ind{\Omega_i})= \bm{C}_i^\star(\ell) +
  C(\ind{\Omega_i}).
\]

 Point~\ref{it:pareto} follows directly from Point~\ref{it:loss} and the the following
 decomposition of $C$ as an extensive function:
 \begin{equation}
   C(\eta) = \sum_{i \in I} \bm{C}_i(\bm{\eta}_i). \qedhere
 \end{equation}

\end{proof}

\printbibliography
\end{document}